\newtheorem{lem}{Lemma}[section]
\newtheorem{thm}{Theorem}[section]
\newtheorem{cor}{Corollary}[section]
\theoremstyle{definition}
\begin{document}
\title{Connected signed graphs with given inertia indices and given girth \footnote{This work is sponsored by Natural Science Foundation of Xinjiang Uygur Autonomous
Region (No. 2022D01A218).}}
\author{{BeiYan Liu,\ \   Fang Duan \footnote{Email: fangbing327@126.com}}\\[2mm]
\small School of Mathematics Science,  Xinjiang Normal University,\\
\small Urumqi, Xinjiang 830017, P.R. China}
\date{}
\maketitle {\flushleft\large\bf Abstract:}
Suppose that $\Gamma=(G, \sigma)$ is a connected signed graph with at least one cycle. The number of positive, negative and zero eigenvalues of the adjacency matrix of $\Gamma$ are called positive inertia index, negative inertia index and nullity of $\Gamma$, which are denoted by $i_+(\Gamma)$, $i_-(\Gamma)$ and $\eta(\Gamma)$, respectively. Denoted by $g$ the girth, which is the length of the shortest cycle of $\Gamma$. We study relationships between the girth and the negative inertia index of $\Gamma$ in this article. We prove $i_{-}(\Gamma)\geq \lceil\frac{g}{2}\rceil-1$ and extremal signed graphs corresponding to the lower bound are characterized. Furthermore, the signed graph $\Gamma$ with $i_{-}(\Gamma)=\lceil\frac{g}{2}\rceil$ for $g\geq 4$ are given. As a by-product, the connected signed graphs with given positive inertia index, nullity and given girth are also determined, respectively.

\begin{flushleft}
\textbf{Keywords:} Signed graph; Inertia indices; Nullity; Girth
\end{flushleft}
\textbf{AMS Classification:} 05C50

\section{Introduction}
Let $G=(V(G), E(G))$ be a simple graph of order $|G|=n$ with vertex set $V(G)=\{v_1, v_2, ..., v_n\}$ and edge set $E(G)$. For $u, v\in V(G)$ and $S\subseteq V(G)$, $u\sim v$ denotes $uv\in E(G)$, $N_S(v)=\{u\in S\mid uv\in E(G)\}$ denotes the \emph{neighborhood} of $v$ in $S$, and $|N_S(v)|$ is called the \emph{degree} of $v$ in $S$, which is written as $d_S(v)$. In particular, if $S=V(G)$, then $N_S(v), d_S(v)$ can be replaced by $N(v), d(v)$, respectively. As usual, we denote by $K_{n_1, \ldots, n_l}$ the \emph{complete multipartite graph} with $l$ parts of sizes $n_1, \ldots, n_l$, and $C_n$, $P_n$ the \emph{cycle}, \emph{path} on $n$ vertices, respectively.

The \emph{adjacency matrix} $A(G)=(a_{ij})_{n\times n}$ of $G$ is defined as follows: $a_{ij}=1$ if $v_i$ is adjacent to $v_j$, and $a_{ij}=0$ otherwise. Since $A(G)$ is real and symmetric, all its eigenvalues are real, and so can be arranged as $\lambda_1(G)\geq  \lambda_2(G)\geq  \cdots \geq \lambda_n(G)$, which are also called the \emph{eigenvalues} of $G$. The number of positive, negative and zero eigenvalues of $A(G)$ are called \emph{positive inertia index}, \emph{negative inertia index} and \emph{nullity} of $G$, which are denoted by $i_+(G)$, $i_-(G)$ and $\eta(G)$, respectively. The applications of nullity and inertia indices have not only in discussion the stability of chemical molecules, but also in the mathematics itself. Hence they have been broadly investigated in the last decades. For some results on this topic, we refer the reader to \cite{Fang.D1,Fang.D2, M.R.Oboudi3,GuiHai.Yu}\, and references therein.

A \emph{signed graph} $\Gamma=(G, \sigma)$ consists of $G$ (called its \emph{underlying graph}) and a sign function $\sigma: E(G)\rightarrow \{+, -\}$. If $\sigma(e)=+$ (resp., $\sigma(e)=-$) for each $e\in E(G)$, then we denote the signed graph by $\Gamma=(G, +)$ (resp., $\Gamma=(G, -))$. Given a subset $\{u_1, u_2, \ldots ,u_t\}=S\subseteq V(\Gamma)$, the subgraph of $\Gamma$ induced by $S$, written as $\Gamma[S]$, is defined to be the signed graph with vertex set $S$ and edge set $\{u_iu_j\in E(\Gamma)\mid u_i\in S$ and $u_j\in S\}$, where the sign of $u_iu_j$ in $\Gamma[S]$ is the same as it in $\Gamma$. The \emph{adjacency matrix} of $\Gamma=(G, \sigma)$ is defined as $A(\Gamma)=(a_{u,v}^\sigma)_{u,v\in V(G)}$, where $a_{u,v}^\sigma=\sigma(uv) \cdot 1$ if $uv\in E(G)$ and $a_{u,v}^\sigma=0$ otherwise. The eigenvalues of $A(\Gamma)$ are called the \emph{eigenvalues} of $\Gamma$. The inertia indices, nullity and other basic notations of signed graphs are similarly to that of simple graphs.

The \emph{sign} of a cycle $C^\sigma$ is defined by $sgn(C^\sigma)=\prod_{e\in E(C)}\sigma(e)$. If $sgn(C^\sigma)=+$ (resp., $sgn(C^\sigma)=-$), then $C^\sigma$ is said to be \emph{positive} (resp., \emph{negative}). A signed graph $\Gamma$ is said to be \emph{balanced} if all of its cycles are positive, and \emph{unbalanced} otherwise. In particular, an acyclic signed graph is balanced. A function $\theta: V(\Gamma)\rightarrow \{+, -\}$ is called a \emph{switching function} of $\Gamma$. Switching $\Gamma$ by $\theta$, we obtain a new signed graph $\Gamma^\theta=(G, \sigma^\theta)$ whose sign function $\sigma^\theta$ is defined by $\sigma^\theta(xy)=\theta(x)\sigma(xy)\theta(y)$ for any $xy\in E(G)$. Two signed graphs $\Gamma_1$ and $\Gamma_2$ are said to be \emph{switching equivalent}, denoted by $\Gamma_1 \sim \Gamma_2$, if there exists a switching function $\theta$ such that $\Gamma_2=\Gamma_1^{\theta}$. By Sylvester's law of inertia, one can easily deduce that switching equivalent signed graphs share same inertia indices.

\begin{thm}[\cite{Yaoping.Hou}]\label{thm-1-1}
Let $\Gamma=(G, \sigma)$ be a signed graph. Then $\Gamma$ is balanced if and only if $\Gamma \backsim (G,+)$.
\end{thm}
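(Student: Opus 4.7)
The plan is to prove the two directions separately, with the non-trivial content lying in the "balanced implies switching equivalent to $(G,+)$" direction.

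For the easy direction, I would note that switching acts on the sign of any cycle $C=v_1v_2\cdots v_kv_1$ by
\[
sgn(C^{\sigma^\theta})=\prod_{i=1}^{k}\theta(v_i)\sigma(v_iv_{i+1})\theta(v_{i+1})=\Bigl(\prod_{i=1}^{k}\theta(v_i)^2\Bigr)\prod_{i=1}^{k}\sigma(v_iv_{i+1})=sgn(C^\sigma),
\]
so switching preserves the sign of every cycle. Since $(G,+)$ is balanced and $\Gamma\sim(G,+)$, it follows that $\Gamma$ is balanced.

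For the non-trivial direction, assume $\Gamma$ is balanced; I would construct an explicit switching function $\theta$ witnessing $\Gamma\sim(G,+)$. It suffices to treat each connected component of $G$ separately, so I assume $G$ is connected. Fix a spanning tree $T$ of $G$ with a root $v_0$, set $\theta(v_0)=+$, and for every other vertex $v$ define
\[
\theta(v)=\prod_{e\in P_T(v_0,v)}\sigma(e),
\]
where $P_T(v_0,v)$ is the unique $v_0v$-path in $T$. A short induction on the distance from $v_0$ in $T$ will show that for every tree edge $uv$ (with $u$ the parent of $v$), $\theta(u)\sigma(uv)\theta(v)=\sigma(uv)^2=+$, i.e.\ all tree edges are positive in $\Gamma^\theta$.

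The step that carries the real content is showing the same for a non-tree edge $xy$. Such an edge together with $P_T(v_0,x)\cup P_T(v_0,y)$ determines a unique cycle $C$ of $\Gamma$, and $sgn(C^\sigma)=+$ by the balance hypothesis. Expressing this sign as $\sigma(xy)\cdot\prod_{e\in P_T(v_0,x)}\sigma(e)\cdot\prod_{e\in P_T(v_0,y)}\sigma(e)$ (the $v_0$-to-lowest-common-ancestor portion appears twice and cancels), one gets $\sigma(xy)\theta(x)\theta(y)=+$, so $xy$ is positive in $\Gamma^\theta$ as well. Therefore $\Gamma^\theta=(G,+)$ and $\Gamma\sim(G,+)$. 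The potentially subtle point I would be careful about is the parity bookkeeping on the shared prefix of $P_T(v_0,x)$ and $P_T(v_0,y)$; once that is handled cleanly, the rest is routine.
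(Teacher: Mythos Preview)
Your argument is correct and is the standard spanning-tree construction for Harary's balance theorem. Note, however, that the paper does not supply its own proof of this statement: Theorem~\ref{thm-1-1} is quoted as a known result from \cite{Yaoping.Hou} and is used as a black box elsewhere in the paper, so there is no in-paper proof to compare against. Your write-up would serve perfectly well as a self-contained justification; the only cosmetic point is that the ``potentially subtle'' cancellation on the shared prefix is already fully handled by your observation that each shared edge contributes $\sigma(e)^2=+$ to $\theta(x)\theta(y)$, so no further care is needed there.
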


\emph{Signed unicyclic graph} is a connected signed graph which has only one cycle. For an unbalanced signed unicyclic graph, the follow lemma holds.

\begin{lem}[\cite{FanY.Z}] \label{lem-2-11}
Let $\Gamma$ be an unbalanced signed unicyclic graph of order $n$. Then $\Gamma$ is switching equivalent to a signed unicyclic graph of order $n$ with exactly one (arbitrary) negative edge on the cycle.
\end{lem}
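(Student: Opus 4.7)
The plan is to exploit the fact that a unicyclic graph contains exactly one cycle, so removing any single cycle edge leaves a spanning tree, and then to use Theorem~\ref{thm-1-1} together with the switching-invariance of cycle signs to concentrate the negativity onto that one chosen edge.

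Let $C$ denote the unique cycle of $\Gamma$, and fix an arbitrary edge $e = uv \in E(C)$; the goal is to produce a signed graph switching-equivalent to $\Gamma$ whose only negative edge is $e$. First I would remove $e$ from $\Gamma$: since $\Gamma$ is unicyclic, the resulting spanning subgraph $T = \Gamma - e$ is a spanning tree of the underlying graph, and the restricted signed graph $(T, \sigma|_T)$ is automatically balanced because it is acyclic. By Theorem~\ref{thm-1-1}, there is a switching function $\theta:V(\Gamma)\to\{+,-\}$ such that $(\sigma|_T)^\theta \equiv +$, i.e., every tree edge becomes positive after switching by $\theta$.

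Now I would apply the same $\theta$ to the whole of $\Gamma$ and examine the edge $e$. In $\Gamma^\theta$ all edges of $T$ are positive, so every edge of the cycle $C$ other than $e$ is positive. Since the sign of a cycle is invariant under switching (each vertex on $C$ contributes $\theta(x)^2 = +$ to the product), we have
\[
sgn(C^{\sigma^\theta}) = sgn(C^{\sigma}) = -,
\]
the last equality holding because $\Gamma$ is unbalanced and $C$ is its only cycle. On the other hand, $sgn(C^{\sigma^\theta})$ equals $\sigma^\theta(e)$ times a product of pluses, so $\sigma^\theta(e) = -$. Thus $\Gamma^\theta$ has exactly one negative edge, namely $e$, and this edge lies on the cycle as required.

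Finally, since the choice of $e\in E(C)$ was arbitrary, the construction shows that $\Gamma$ is switching equivalent to a signed unicyclic graph of order $n$ whose unique negative edge is any prescribed edge of the cycle. There is no real obstacle in this argument; the only point that requires a moment of care is verifying the switching-invariance of the cycle sign (so that the forced value $\sigma^\theta(e)=-$ is genuinely dictated by the unbalanced hypothesis), and this is immediate from the definition $\sigma^\theta(xy)=\theta(x)\sigma(xy)\theta(y)$ together with the fact that each vertex of $C$ appears in exactly two cycle edges.
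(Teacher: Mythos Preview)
Your argument is correct. The paper itself does not supply a proof of this lemma; it is quoted directly from \cite{FanY.Z} without demonstration, so there is no in-paper proof to compare against. Your approach---deleting a chosen cycle edge to obtain a spanning tree, invoking Theorem~\ref{thm-1-1} to switch all tree edges to positive, and then using switching-invariance of the cycle sign to force the deleted edge to carry the minus---is the standard one and is exactly how this fact is usually established.
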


Signed graphs were introduced by Harary \cite{F.Harary}. Recently, there have been a number of investigations on the spectra of signed graphs, see \cite{Fang.D,W.H.Haemers,M.Brunetti,Q.Wu} and references therein. For more background on signed graphs we refer to \cite{F.Belardo} for which the authors make many conjectures about signed graphs. The simple connected graphs with given inertia indices and given girth are considered in \cite{Fang.D1,Fang.D2}. In this article, we will characterize connected signed graphs with inertia indices and given girth. The paper is organized as follows: In Section 2, some notions and lemmas are introduced. In Section 3,
we prove that $i_-(\Gamma)\geq \lceil\frac{g}{2}\rceil-1$ for a connected signed graph $\Gamma$ with given girth, and the extremal signed graphs corresponding to $\lceil\frac{g}{2}\rceil-1$ are completely characterized. Furthermore, we give the characterization for connected signed graph $\Gamma$ with $i_-(\Gamma)= \lceil\frac{g}{2}\rceil$ and $g\geq 4$. In Section 4 and Section 5, by elementary knowledge of linear algebra, the connected signed graphs with give positive inertia index, nullity and given girth are also determined, respectively.

\section{Preliminaries}
For two real symmetric matrices $A$ and $B$ of order $n$, if there exists a real invertible matrix $C$ such that $B=C^TAC$, then $B$ is called \emph{congruent} to $A$. The famous Sylvester's law of inertia states the relations of the inertia indices of two congruent matrices.

\begin{lem}\label{lem-2-0}
(Sylvester's law of inertia) If two real symmetric matrices $A$ and $B$ are congruent, then they have the same positive (resp., negative)
inertia index, the same nullity.
\end{lem}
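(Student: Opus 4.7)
The plan is to establish the result by reducing both $A$ and $B$ to a common canonical form under congruence and then showing that the parameters of this canonical form are determined by $A$ alone (equivalently, by $B$ alone, since congruence is symmetric). The overall strategy has two stages: a constructive reduction, then an invariance argument.

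First I would use the spectral theorem: since $A$ is real symmetric, there exists an orthogonal matrix $P$ with $P^{T}AP=\mathrm{diag}(\lambda_{1},\ldots,\lambda_{n})$, where the $\lambda_{i}$ are the eigenvalues of $A$ in some order arranged so that the first $p$ are positive, the next $q$ are negative, and the last $r=n-p-q$ are zero. Scaling each basis vector by $|\lambda_{i}|^{-1/2}$ when $\lambda_{i}\neq 0$ (an operation of the form $P\mapsto PD$ with $D$ invertible diagonal, hence still a congruence), I obtain an invertible $C_{A}$ with $C_{A}^{T}AC_{A}=\mathrm{diag}(I_{p},-I_{q},0_{r})$. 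The same construction applied to $B$ yields an invertible $C_{B}$ with $C_{B}^{T}BC_{B}=\mathrm{diag}(I_{p'},-I_{q'},0_{r'})$. Because congruence is an equivalence relation, if I can show $p=p'$, $q=q'$, $r=r'$, then the positive inertia indices, negative inertia indices, and nullities of $A$ and $B$ (which equal $p,q,r$ and $p',q',r'$ respectively) coincide.

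The main step, and the one I expect to be the actual obstacle, is the invariance of $(p,q,r)$ under the choice of the reducing matrix. Suppose $C^{T}AC=\mathrm{diag}(I_{p},-I_{q},0_{r})$ and $D^{T}AD=\mathrm{diag}(I_{p'},-I_{q'},0_{r'})$ for invertible $C,D$. I would argue that $p$ equals the maximum dimension of a subspace $U\subseteq\mathbb{R}^{n}$ on which the quadratic form $x\mapsto x^{T}Ax$ is positive definite, which gives a coordinate-free description of $p$ and hence forces $p=p'$. The lower bound is immediate by taking $U$ to be the span of the first $p$ columns of $C$. For the upper bound, suppose $U$ has dimension $p+1$ with $x^{T}Ax>0$ on $U\setminus\{0\}$; letting $W$ be the span of the last $q+r$ columns of $C$ (on which $x^{T}Ax\le 0$), a dimension count gives
\[
\dim(U\cap W)\ge(p+1)+(q+r)-n=1,
\]
contradicting positivity on $U$. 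The same argument applied to $-A$ gives invariance of $q$, and then $r=n-p-q$ is forced.

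Finally, the nullity statement follows either from $r=n-p-q$ above, or more directly from the fact that $\mathrm{rank}$ is preserved under multiplication by invertible matrices: $\mathrm{rank}(B)=\mathrm{rank}(C^{T}AC)=\mathrm{rank}(A)$, so $\eta(A)=n-\mathrm{rank}(A)=n-\mathrm{rank}(B)=\eta(B)$. Combining the two invariants completes the proof.
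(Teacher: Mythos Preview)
Your proof is correct: the two-stage strategy (spectral diagonalization followed by rescaling to the canonical form $\mathrm{diag}(I_p,-I_q,0_r)$, then the dimension-count argument showing that $p$ is the maximal dimension of a subspace on which the quadratic form is positive definite) is the standard textbook proof of Sylvester's law, and every step you wrote checks out.

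There is nothing to compare against, however: the paper states this lemma as a classical preliminary and gives no proof of its own. Your argument is exactly the proof one would supply if asked to fill in the details, so it is entirely appropriate here.
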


\begin{lem}[\cite{R.A.Horn}]\label{lem-2-0-0}
Let $A$ be a real matrix of order $n$ and $\lambda_1, \lambda_2, \ldots, \lambda_n$ be all eigenvalues of $A$. Then $det(A)=\lambda_1 \lambda_2\cdots\lambda_n$.
\end{lem}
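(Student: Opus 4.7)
The plan is to prove the statement by working with the characteristic polynomial of $A$. First I would set $p_A(t)=\det(tI-A)$, which is a monic polynomial of degree $n$ in $t$. Since the eigenvalues $\lambda_1,\lambda_2,\ldots,\lambda_n$ are by definition the roots (counted with algebraic multiplicity) of $p_A(t)$ over $\mathbb{C}$, the fundamental theorem of algebra gives the factorization $p_A(t)=\prod_{i=1}^{n}(t-\lambda_i)$.

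Next I would evaluate $p_A(t)$ at $t=0$ in two different ways. On the one hand, using the definition and pulling a factor of $-1$ from each of the $n$ columns of $-A$, one obtains $p_A(0)=\det(-A)=(-1)^n\det(A)$. On the other hand, plugging $t=0$ into the factored form yields $p_A(0)=\prod_{i=1}^{n}(-\lambda_i)=(-1)^n\lambda_1\lambda_2\cdots\lambda_n$. Equating the two expressions and cancelling $(-1)^n$ gives $\det(A)=\lambda_1\lambda_2\cdots\lambda_n$, which is exactly the claim.

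The only mildly subtle point, rather than a true obstacle, is that although $A$ is real its eigenvalues may be complex; however this causes no difficulty because the non-real eigenvalues occur in complex-conjugate pairs, so the product $\prod_{i=1}^{n}\lambda_i$ is automatically real and matches $\det(A)\in\mathbb{R}$. An equivalent route, if one prefers to avoid the characteristic polynomial, would be to invoke Schur's triangularization to obtain a unitary $U$ with $U^{*}AU=T$ upper triangular having the eigenvalues $\lambda_i$ on the diagonal; then $\det(A)=\det(T)=\prod_{i=1}^n\lambda_i$ because the determinant is similarity-invariant and equals the product of diagonal entries for triangular matrices. Either route is elementary, so I would present the first since it is shorter.
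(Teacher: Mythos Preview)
Your argument is correct: evaluating the characteristic polynomial $p_A(t)=\det(tI-A)=\prod_{i=1}^n(t-\lambda_i)$ at $t=0$ is the standard and fully rigorous way to obtain the identity, and your remark about complex-conjugate pairs (or the alternative via Schur triangularization) handles the real-matrix subtlety cleanly. There is nothing to compare, since the paper does not prove this lemma at all; it is simply quoted from \cite{R.A.Horn} as a known fact, so your proposal supplies strictly more than the paper does.
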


\begin{thm}[\cite{D.Cvetkovi}]\label{thm-2-0}
(Interlacing Theorem) Let $Q$ be a real $n\times m$ matrix such that $Q^TQ=I_m$ (where $m < n$), and let $A$ be an $n\times n$ real symmetric matrix
with eigenvalues $\lambda_1\geq \lambda_2\geq \cdots\geq \lambda_n$. If the eigenvalues of $B=Q^TAQ$ are $\mu_1\geq \mu_2\geq \cdots\geq \mu_m$, then
the eigenvalues of $B$ interlace those of $A$, that is, $\lambda_{n-m+i}\leq \mu_i\leq \lambda_i$ $(i=1,2, ..., m)$.
\end{thm}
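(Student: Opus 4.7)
The plan is to deduce this statement from the Courant--Fischer min--max characterization of the eigenvalues of a real symmetric matrix. Recall that for a real symmetric $n\times n$ matrix $A$ with eigenvalues $\lambda_1\ge\cdots\ge\lambda_n$, one has the twin identities
\[
\lambda_i(A)=\max_{\dim S=i}\,\min_{\substack{x\in S\\ x\ne 0}}\frac{x^T A x}{x^T x}=\min_{\dim S=n-i+1}\,\max_{\substack{x\in S\\ x\ne 0}}\frac{x^T A x}{x^T x},
\]
the extrema being taken over subspaces $S\subseteq\mathbb{R}^n$ of the indicated dimensions. I would take this theorem as the sole non-trivial input.

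The first step is an isometric transfer of Rayleigh quotients through $Q$. Because $Q^T Q=I_m$, the map $y\mapsto Qy$ is an isometric embedding of $\mathbb{R}^m$ into $\mathbb{R}^n$, so for every $y\in\mathbb{R}^m$ we have $(Qy)^T(Qy)=y^T y$ and $(Qy)^T A(Qy)=y^T B y$. In particular, for any subspace $T\subseteq\mathbb{R}^m$ of dimension $k$, the image $Q(T)\subseteq\mathbb{R}^n$ is again $k$-dimensional, and the set of Rayleigh quotients of $A$ on $Q(T)\setminus\{0\}$ coincides with the set of Rayleigh quotients of $B$ on $T\setminus\{0\}$.

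Next, fix $i\in\{1,\dots,m\}$ and apply the two Courant--Fischer identities in turn. For the upper bound $\mu_i\le\lambda_i$, choose a subspace $T^{\ast}\subseteq\mathbb{R}^m$ of dimension $i$ that realises $\mu_i$ in the max--min form for $B$; then $Q(T^{\ast})$ is an $i$-dimensional subspace of $\mathbb{R}^n$ on which the minimum Rayleigh quotient of $A$ equals $\mu_i$, and the max--min identity for $\lambda_i$ forces $\lambda_i\ge\mu_i$. For the lower bound $\lambda_{n-m+i}\le\mu_i$, dually pick a subspace $U^{\ast}\subseteq\mathbb{R}^m$ of dimension $m-i+1$ realising $\mu_i$ in the min--max form for $B$; then $Q(U^{\ast})$ is an $(m-i+1)$-dimensional subspace of $\mathbb{R}^n$ on which the maximum Rayleigh quotient of $A$ equals $\mu_i$. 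Since the min--max characterisation of $\lambda_{n-m+i}$ for $A$ ranges over subspaces of dimension $n-(n-m+i)+1=m-i+1$, this yields $\lambda_{n-m+i}\le\mu_i$.

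The only delicate point is the bookkeeping of dimensions and of the two dual forms of Courant--Fischer; the hypothesis $Q^T Q=I_m$ does all of the geometric work, simultaneously guaranteeing injectivity of $Q$, preserving Rayleigh quotients, and preserving the dimension of every test subspace. Once Courant--Fischer is in hand, no further computation is needed and both inequalities drop out symmetrically.
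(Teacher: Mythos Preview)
Your argument is correct and is precisely the standard derivation of the interlacing inequalities from the Courant--Fischer min--max principle. The two steps---the isometric transfer of Rayleigh quotients via $Q$ (using $Q^TQ=I_m$), and the application of the dual max--min and min--max identities with the correct dimension count $n-(n-m+i)+1=m-i+1$---are carried out cleanly.

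However, there is nothing to compare against: the paper does not prove Theorem~\ref{thm-2-0}. It is stated as a quoted result from \cite{D.Cvetkovi} and used only as a black box (to justify Lemma~\ref{lem-2-1-1}). So your proposal supplies a proof where the paper deliberately offers none; the approach you give is in fact the one found in the cited reference and in most standard texts.
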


The following Lemma \ref{lem-2-1-1} is obviously by Interlacing Theorem.

\begin{lem}\label{lem-2-1-1}
Let $\Gamma'$ be an induced signed subgraph of signed graph $\Gamma$. Then $i_+(\Gamma)\geq i_+(\Gamma')$ and $i_-(\Gamma)\geq i_-(\Gamma')$.
\end{lem}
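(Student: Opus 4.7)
The plan is to apply the Interlacing Theorem (Theorem \ref{thm-2-0}) directly, exploiting the fact that the adjacency matrix $A(\Gamma')$ of an induced signed subgraph is precisely a principal submatrix of $A(\Gamma)$. To fit this into the framework of Theorem \ref{thm-2-0}, I would set $n=|V(\Gamma)|$ and $m=|V(\Gamma')|$, assume $m<n$ (the case $m=n$ gives $\Gamma'=\Gamma$ and is trivial), and construct an $n\times m$ matrix $Q$ whose columns are the standard basis vectors $e_{i_1},\ldots,e_{i_m}$ of $\mathbb{R}^n$ corresponding to the positions of the vertices of $\Gamma'$ inside $V(\Gamma)$. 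A direct check gives $Q^TQ=I_m$ and $Q^TA(\Gamma)Q=A(\Gamma')$, so the hypotheses of Theorem \ref{thm-2-0} are satisfied.

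Next, I would label the eigenvalues of $A(\Gamma)$ as $\lambda_1\ge\cdots\ge\lambda_n$ and those of $A(\Gamma')$ as $\mu_1\ge\cdots\ge\mu_m$. The Interlacing Theorem then gives $\lambda_{n-m+i}\le\mu_i\le\lambda_i$ for every $i=1,\ldots,m$. From the right inequality $\mu_i\le\lambda_i$ I read off that whenever $\mu_i>0$ we also have $\lambda_i>0$; hence the number of positive eigenvalues of $A(\Gamma)$ is at least that of $A(\Gamma')$, which proves $i_+(\Gamma)\ge i_+(\Gamma')$. Symmetrically, the left inequality $\lambda_{n-m+i}\le\mu_i$ shows that whenever $\mu_i<0$ we have $\lambda_{n-m+i}<0$; since the map $i\mapsto n-m+i$ is a bijection onto $\{n-m+1,\ldots,n\}$, this forces at least $i_-(\Gamma')$ of the $\lambda_j$'s with $j$ near $n$ to be negative, yielding $i_-(\Gamma)\ge i_-(\Gamma')$.

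There is essentially no serious obstacle here; the only subtlety worth flagging is the strict inequality $m<n$ required in the statement of Theorem \ref{thm-2-0}, which is why the trivial case $\Gamma'=\Gamma$ must be handled separately. Everything else is a bookkeeping exercise in counting signs of eigenvalues via the two chains of interlacing inequalities, which is exactly why the authors only remark that the result is immediate from the Interlacing Theorem.
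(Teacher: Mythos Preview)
Your proposal is correct and follows exactly the route the paper indicates: the authors simply state that Lemma~\ref{lem-2-1-1} is ``obviously by Interlacing Theorem'' without writing out the details, and what you have done is spell out precisely that argument (constructing the projection matrix $Q$, verifying $Q^TA(\Gamma)Q=A(\Gamma')$, and reading off the inertia bounds from the two chains of interlacing inequalities). There is nothing to add.
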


\begin{lem}[\cite{GuiHai.Yu}, \cite{G.H.Yu}, \cite{G.H.Yu1}]\label{lem-2-1}
Let $C_n^\sigma$, $P_n^\sigma$ be a signed cycle, a signed path of order $n$, respectively.
\begin{enumerate}[(1)]
\item If $C_n^\sigma$ is balanced, then $i_-(C_n^\sigma)=\left\{\begin{array}{ll}\lceil\frac{n}{2}\rceil-1 &n\equiv 0,1(\bmod 4) \\ \lceil\frac{n}{2}\rceil & n\equiv 2,3(\bmod 4)\end{array}\right.$;
\item If $C_n^\sigma$ is unbalanced, then $i_-(C_n^\sigma)=\left\{\begin{array}{ll}\lceil\frac{n}{2}\rceil-1 &n\equiv 2,3(\bmod 4) \\ \lceil\frac{n}{2}\rceil & n\equiv 0,1(\bmod 4)\end{array}\right.$;
\item $i_-(P_n^\sigma)=\lfloor\frac{n}{2}\rfloor$.
\end{enumerate}
\end{lem}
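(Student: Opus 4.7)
My plan is to reduce each case to a graph whose adjacency spectrum is explicitly known, and then count the negative eigenvalues by analyzing the resulting cosine expression modulo $4$.

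First, for $P_n^\sigma$ I would use the fact that a path is a tree, hence acyclic and therefore balanced by the convention established in the paper. Theorem \ref{thm-1-1} then gives $P_n^\sigma \sim (P_n,+)$, and by Sylvester's law of inertia the two share the same inertia indices. Since the eigenvalues of the usual path $P_n$ are $2\cos\!\bigl(\tfrac{k\pi}{n+1}\bigr)$ for $k=1,\dots,n$, exactly the indices $k$ with $k>(n+1)/2$ produce a negative value, and this count is $\lfloor n/2\rfloor$, proving (3). Note that no boundary ambiguity arises because $\cos(\tfrac{k\pi}{n+1})$ never vanishes for $k\in\{1,\dots,n\}$.

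For (1) and (2), I would again reduce to a canonical representative. A balanced signed cycle satisfies $C_n^\sigma \sim (C_n,+)$ by Theorem \ref{thm-1-1}; an unbalanced signed cycle, by Lemma \ref{lem-2-11} applied with the cycle as the whole graph, is switching equivalent to the cycle carrying exactly one negative edge. The adjacency spectra of these two canonical signed cycles are classical: the balanced one has eigenvalues $2\cos\!\bigl(\tfrac{2k\pi}{n}\bigr)$ for $k=0,1,\dots,n-1$, and the unbalanced one (a circulant matrix with one sign flip) has eigenvalues $2\cos\!\bigl(\tfrac{(2k+1)\pi}{n}\bigr)$ for $k=0,1,\dots,n-1$. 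In both cases counting negatives amounts to counting integers $k$ for which the relevant angle lies in $(\tfrac{\pi}{2},\tfrac{3\pi}{2})$.

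The main obstacle is the careful $\bmod\,4$ casework, because whether the boundary angles $\pi/2$ and $3\pi/2$ are actually attained by some $k$ decides whether one picks up an extra zero eigenvalue (shifting the count by $1$). Concretely, for the balanced cycle the angle $2k\pi/n=\pi/2$ is achievable exactly when $4\mid n$, which is why the formula in (1) splits at $n\equiv 0,1\pmod 4$ versus $n\equiv 2,3\pmod 4$; for the unbalanced cycle the parity condition becomes $(2k+1)/n=1/2$, i.e.\ $n\equiv 2\pmod 4$, producing the complementary split in (2). After tabulating the four residues of $n$ modulo $4$ in each of the two cases and counting the $k$'s with the angle strictly inside $(\pi/2,3\pi/2)$, the formulas $\lceil n/2\rceil-1$ and $\lceil n/2\rceil$ drop out directly. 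Everything else is routine trigonometric bookkeeping, so I would spend most of the proof setting up these boundary computations cleanly.
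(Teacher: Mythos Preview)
The paper does not supply its own proof of this lemma; it is quoted from the cited references \cite{GuiHai.Yu,G.H.Yu,G.H.Yu1}. So there is no in-paper argument to compare against, and your plan---reduce via switching (Theorem~\ref{thm-1-1} and Lemma~\ref{lem-2-11}) to $(P_n,+)$, $(C_n,+)$, or the cycle with a single negative edge, write down the explicit cosine spectra, and count angles strictly inside $(\pi/2,3\pi/2)$ modulo~$4$---is a correct and standard way to establish the result.

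Two small corrections to your write-up. First, your parenthetical for the path is false: $\cos\!\bigl(\tfrac{k\pi}{n+1}\bigr)$ \emph{does} vanish, at $k=\tfrac{n+1}{2}$, whenever $n$ is odd. This does not damage your count (the strict inequality $k>(n+1)/2$ already excludes that index), but the stated reason is wrong; just say the zero eigenvalue, when present, sits exactly at the excluded boundary. Second, the unbalanced cycle with one negative edge is not literally a circulant matrix; your eigenvalue formula $2\cos\!\bigl(\tfrac{(2k+1)\pi}{n}\bigr)$ is nevertheless correct (the eigenvectors come from the $n$th roots of $-1$ rather than of $1$), so simply drop the word ``circulant'' or justify the spectrum directly.
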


\begin{lem}[\cite{G.H.Yu}]\label{lem-2-2}
Let $\Gamma$ be a signed graph containing a pendant vertex $u$, and let $\Gamma'$ be the induced signed subgraph of $\Gamma$ obtained by deleting $u$ together with the vertices adjacent to it. Then $i_+(\Gamma)=i_+(\Gamma')+1$ and $i_-(\Gamma)=i_-(\Gamma')+1$.
\end{lem}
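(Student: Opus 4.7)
The plan is to produce an explicit congruence that block-diagonalizes $A(\Gamma)$, so that Sylvester's law of inertia (Lemma \ref{lem-2-0}) delivers both equalities at once. Let $v$ denote the unique neighbor of the pendant vertex $u$, set $\epsilon=\sigma(uv)\in\{+1,-1\}$, and collect the signs on the edges from $v$ to $V(\Gamma')$ into a column vector $\alpha$ (with zeros in coordinates where $v$ has no neighbor). Ordering the vertices as $u$, $v$, then the vertices of $\Gamma'$, one has
$$A(\Gamma)=\begin{pmatrix} 0 & \epsilon & 0 \\ \epsilon & 0 & \alpha^{T} \\ 0 & \alpha & A(\Gamma') \end{pmatrix}.$$

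Next, I would zero out $\alpha$ by using row~$1$, whose only nonzero entry $\epsilon$ sits in column~$2$: for each $w\in V(\Gamma')$, subtract $(\alpha_w/\epsilon)$ times row~$1$ from the row indexed by $w$, followed by the mirror column operation to preserve symmetry. Because column~$1$ of $A(\Gamma)$ vanishes outside its single entry in row~$2$, the mirror column operation only alters row~$2$ and so leaves the lower-right block $A(\Gamma')$ unchanged while clearing $\alpha^{T}$; no nonzero entry is reintroduced elsewhere. The matrix obtained is block-diagonal with blocks $\begin{pmatrix} 0 & \epsilon \\ \epsilon & 0 \end{pmatrix}$ and $A(\Gamma')$, and it is congruent to $A(\Gamma)$.

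Finally, the $2\times 2$ block has characteristic polynomial $\lambda^{2}-1$, regardless of whether $\epsilon=+1$ or $\epsilon=-1$, so it contributes exactly one positive and one negative eigenvalue. Combining with Sylvester's law yields $i_{+}(\Gamma)=i_{+}(\Gamma')+1$ and $i_{-}(\Gamma)=i_{-}(\Gamma')+1$. The only point that needs care is the bookkeeping that these row/column operations form a genuine congruence (the same elementary matrix on each side) and that performing them simultaneously does not reintroduce entries in blocks already cleaned; this is automatic here because the first column of $A(\Gamma)$ vanishes outside row~$2$, so the elimination is a one-shot reduction rather than an iterative one, and I do not foresee any substantial obstacle.
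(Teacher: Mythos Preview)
Your argument is correct: the congruence you describe is valid, the elimination leaves $A(\Gamma')$ untouched precisely because column~$1$ is supported only in row~$2$, and the resulting block-diagonal form together with Lemma~\ref{lem-2-0} gives both equalities. Note, however, that the paper does not supply its own proof of Lemma~\ref{lem-2-2}; it is quoted from \cite{G.H.Yu} as a known result, so there is no in-paper argument to compare against. Your proof is the standard one for this pendant-reduction lemma.
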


\begin{lem}[\cite{Fang.D}]\label{lem-2-6}
Let $\Gamma=(G,\sigma)$ be a signed graph. Then $i_-(\Gamma)=1$ if and only if $\Gamma\cong $ $(K_{n_1,n_2}+sK_1, +)$, or $\Gamma\cong K^\sigma_{n_1,n_2,\ldots,n_l}+tK_1$ $(l\geq 3)$, where $K^\sigma_{n_1,n_2,\ldots,n_l}$ contains only unbalanced signed 3-cycle.
\end{lem}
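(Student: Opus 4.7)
My plan is to prove the equivalence in two parts. For sufficiency, the first family is immediate: $(K_{n_1,n_2},+)$ is balanced, hence by Theorem~\ref{thm-1-1} switching equivalent to the unsigned $K_{n_1,n_2}$, whose spectrum $\{\pm\sqrt{n_1n_2},0^{(n_1+n_2-2)}\}$ gives $i_-=1$, and the adjoined isolated vertices contribute only zero eigenvalues. For the second family I would show, either by direct spectral computation or by induction on $l$ combined with switching to a convenient sign pattern inside each part, that a signed $K^\sigma_{n_1,\ldots,n_l}$ with $l\geq 3$ whose every triangle is unbalanced has exactly one negative eigenvalue; the underlying mechanism is that after an appropriate switch, the adjacency matrix admits a rank-one negative perturbation of a positive semidefinite matrix.

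For the necessity direction, assume $i_-(\Gamma)=1$. Since $i_-$ is additive over connected components and $i_-(K_2)=1$, the graph $\Gamma$ has exactly one non-trivial component $\Gamma_0$ together with isolated vertices, so it suffices to classify $\Gamma_0$. Combining Lemma~\ref{lem-2-1-1} with Lemma~\ref{lem-2-1} immediately rules out many induced subgraphs of $\Gamma_0$: because $i_-(P_4)=2$, $i_-(C_n)\geq 2$ for every $n\geq 5$, balanced triangles have $i_-=2$, and unbalanced $4$-cycles have $i_-=2$, the component $\Gamma_0$ must be $P_4$-free, contain no induced cycle of length at least $5$, contain no balanced $3$-cycle, and contain no unbalanced $4$-cycle.

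I then split on whether $\Gamma_0$ has a pendant vertex. If it does, then Lemma~\ref{lem-2-2} applied to a pendant $u$ with unique neighbour $v$ yields $i_-(\Gamma_0-u-v)=0$, so $\Gamma_0-u-v$ has no edges, which forces every vertex of $\Gamma_0$ to be adjacent to $v$; hence $\Gamma_0\cong K_{1,n-1}$, a balanced star, landing in case (a). Otherwise $\delta(\Gamma_0)\geq 2$, and the crucial claim is that the underlying graph of $\Gamma_0$ is complete multipartite, equivalently that $\Gamma_0$ has no induced $K_1+K_2$. I would argue by contradiction: given an induced $K_1+K_2$ with isolated vertex $a$ and edge $bc$, a shortest path from $a$ to $\{b,c\}$ combined with the minimum-degree condition to pick a second neighbour of $a$ produces, after case analysis, either an induced $P_4$, an induced balanced triangle, or an induced unbalanced $4$-cycle, all forbidden. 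Once $\Gamma_0=K^\sigma_{n_1,\ldots,n_l}$ is established, the signing restrictions follow: for $l=2$ the absence of unbalanced $4$-cycles forces the signing to be balanced, i.e.\ case (a); for $l\geq 3$ the absence of balanced triangles gives precisely case (b), and the $4$-cycle condition is then automatic since any $4$-cycle in $K^\sigma_{n_1,\ldots,n_l}$ can be written as a sign product of triangles through an auxiliary vertex in a third part.

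The principal obstacle I anticipate is the subcase $\delta(\Gamma_0)\geq 2$, where the case analysis to rule out an induced $K_1+K_2$ requires careful bookkeeping among $P_4$-freeness, minimum degree, and the forbidden short-cycle constraints. A secondary subtlety lies on the sufficiency side: verifying that the triangle-unbalanced condition alone forces $i_-=1$ is spectral rather than combinatorial and needs an explicit low-rank decomposition or an inductive spectral argument.
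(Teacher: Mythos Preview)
The paper does not prove Lemma~\ref{lem-2-6}; it is quoted from \cite{Fang.D} as a known result, so there is no in-paper argument to compare your sketch against.

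Your outline is largely sound: additivity over components, the pendant-vertex reduction via Lemma~\ref{lem-2-2}, and the list of forbidden induced signed subgraphs ($P_4^\sigma$, $C_n^\sigma$ for $n\ge 5$, balanced triangles, unbalanced $4$-cycles) are all correct. The genuine gap is exactly where you flag it, and it is worse than you suggest. In the $\delta(\Gamma_0)\ge 2$ case you claim that an induced $K_1+K_2$ forces, via case analysis, an induced $P_4$, a balanced triangle, or an unbalanced $4$-cycle. This fails: take the underlying graph $G=\overline{P_3\cup K_2}$ on vertices $a,b,c,d,e$ (edges $ac,ad,ae,bd,be,cd,ce$). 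It is $P_4$-free with $\delta=2$, contains the induced $K_1+K_2$ on $\{b,a,c\}$, yet one can sign it so that both triangles $acd,ace$ are unbalanced and both induced $4$-cycles $adbe,bdce$ are balanced; no forbidden configuration from your list appears. The switching class is unique and a direct computation gives $i_-=2$, so the lemma is not violated, but your proposed mechanism cannot detect this. You will need to bring inertia back into the argument at this step (for instance, via a $5$-vertex induced subgraph whose negative index is forced to be at least $2$), rather than relying solely on $3$- and $4$-vertex obstructions.

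On the sufficiency side for $l\ge 3$, the ``rank-one negative perturbation'' remark is the right intuition but is not yet a proof; you should exhibit the switching that normalises the signs and then compute the inertia explicitly (for example, reduce to a matrix of the form $D-J$ on the quotient by parts and read off the signature).
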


Let $\Gamma=(G,\sigma)$ be a signed graph and $u, v\in V(\Gamma)$. Suppose that $N(u)=N(v)$ and there exists a switching function $\theta$ satisfying $\sigma^\theta(uw)= \sigma^\theta(vw)$ for all $w\in N(u)$. Then $u$ and $v$ are called \emph{twin vertices}. $\Gamma$ is called \emph{reduced} if there have no twin vertices in $\Gamma$, and \emph{non-reduced} otherwise. In \cite{Fang.D}, the authors derived the following Theorem \ref{thm-2-1}.

\begin{figure}[htbp]
    \centering
    \includegraphics[width=0.5\linewidth]{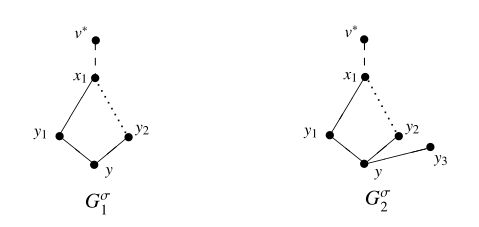}
    \caption{The signed graphs $G_1^\sigma$ and $G_2^\sigma$}
    \label{fig-1}
    \end{figure}

\begin{thm}[\cite{Fang.D}]\label{thm-2-1}
Let $\Gamma$ be a connected triangle-free reduced signed graph. Then $i_-(\Gamma)=2$ if and only if $\Gamma$ is isomorphic to one of $P_4^\sigma$, $P_5^\sigma$, unbalanced $C_4^\sigma$, balanced $C_5^\sigma$, unbalanced $C_6^\sigma$, $G_1^\sigma$, $G_2^\sigma$ (see Fig. \ref{fig-1}).
\end{thm}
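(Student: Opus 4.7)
The plan is to establish both directions of the equivalence. For sufficiency, I would directly verify that each of the seven listed graphs has $i_-=2$: the paths $P_4^\sigma$ and $P_5^\sigma$ follow from Lemma \ref{lem-2-1}(3); the three admissible cycles (unbalanced $C_4^\sigma$, balanced $C_5^\sigma$, unbalanced $C_6^\sigma$) follow from parts (1)--(2) of the same lemma; and $G_1^\sigma$, $G_2^\sigma$ are handled by identifying a pendant vertex and peeling it off via Lemma \ref{lem-2-2}, reducing each to a smaller signed graph whose negative inertia index is already known.

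For necessity, assume $\Gamma$ is connected, triangle-free, reduced, with $i_-(\Gamma)=2$. The main tool is Lemma \ref{lem-2-1-1}, which forces $i_-(\Gamma')\leq 2$ for every induced signed subgraph $\Gamma'$. Applying this with Lemma \ref{lem-2-1}(3) rules out $P_n$ for every $n\geq 6$, so the diameter of $\Gamma$ is at most $4$. Similarly, Lemma \ref{lem-2-1}(1)--(2) forbids every induced cycle $C_n^\sigma$ with $n\geq 7$ as well as induced balanced $C_6^\sigma$ and induced unbalanced $C_5^\sigma$. Hence the only induced cycles $\Gamma$ may contain are $C_4^\sigma$ (either sign), balanced $C_5^\sigma$, and unbalanced $C_6^\sigma$.

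The argument then splits into two cases. In Case 1, $\Gamma$ is acyclic, hence a tree; the diameter bound together with the reduced hypothesis (which in particular forbids two leaves sharing a neighbour, and more generally any pair of twin vertices) forces $\Gamma$ to be a short path, and the surviving options are precisely $P_4^\sigma$ and $P_5^\sigma$. In Case 2, $\Gamma$ contains a cycle; I would fix an induced cycle $C$ of minimum length and, for each of the three admissible types of $C$, examine all possible attachments of an additional vertex $v\in V(\Gamma)\setminus V(C)$. The triangle-free condition, the ban on induced $P_6$ and on the forbidden induced cycles listed above, together with the reduced hypothesis, leave only finitely many configurations; they must match exactly the cycles themselves and the two auxiliary graphs $G_1^\sigma$, $G_2^\sigma$.

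The main obstacle is the case analysis in Case 2, where one must carefully track how the sign on any newly attached edge interacts with the sign of each cycle it creates together with edges of $C$. A "wrong" sign can turn an attachment that looks harmless combinatorially into an induced unbalanced $C_5^\sigma$ or induced balanced $C_6^\sigma$, and the reduced hypothesis must be invoked repeatedly to block the otherwise unbounded family of extensions obtained by duplicating a vertex together with its neighbourhood. Once this bookkeeping is complete, the enumeration terminates and recovers exactly the list in the theorem statement.
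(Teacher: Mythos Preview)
The paper does not prove Theorem \ref{thm-2-1}; it is quoted verbatim from the external reference \cite{Fang.D} and used as a black box (its only role in the present paper is to feed Corollary \ref{cor-1}). Hence there is no ``paper's own proof'' against which to compare your attempt.

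That said, your outline is a plausible route to the result, with one caveat in each direction. For sufficiency, you assume $G_1^\sigma$ and $G_2^\sigma$ carry a pendant vertex so that Lemma \ref{lem-2-2} applies; since you (like the reader of this paper) cannot see Fig.~\ref{fig-1}, you should be prepared for the possibility that neither graph has a pendant vertex, in which case a direct eigenvalue check or a congruence argument (as in the proof of Theorem \ref{thm-3-3}) is needed instead. For necessity, your Case~1 claim that ``diameter $\le 4$ plus reduced forces $\Gamma$ to be a path'' is not true on its own: the spider with three legs of length two is a reduced tree of diameter $4$. You must also invoke $i_-=2$ here (e.g.\ peel a leaf with Lemma \ref{lem-2-2} and observe that the remaining forest must have $i_-=1$, hence is $P_2$ or $P_3$ plus isolated vertices), and then argue that reattaching the deleted pair and imposing reducedness leaves only $P_4^\sigma$ and $P_5^\sigma$. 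With these two points tightened, the strategy is sound, though the Case~2 bookkeeping you flag is indeed where all the work lies.
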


From Theorem \ref{thm-2-1}, it follows Corollary \ref{cor-1}.

\begin{cor}\label{cor-1}
Let $\Gamma$ be a connected signed graph with girth $g=4$.
\begin{enumerate}[(1)]
\setlength{\itemsep}{0pt}
\item If $\Gamma$ contains an unbalanced 4-cycle, then $i_-(\Gamma)=2$ if and only if $\Gamma$ can be obtained from $G_1^\sigma$, $G_2^\sigma$ or unbalanced $C_4^\sigma$ by adding twin vertices;
\item If $\Gamma$ contains no unbalanced 4-cycle, then $i_-(\Gamma)=2$ if and only if $\Gamma$ can be obtained from $P_4^\sigma$, $P_5^\sigma$, balanced $C_5^\sigma$ or unbalanced $C_6^\sigma$ by adding twin vertices.
\end{enumerate}
\end{cor}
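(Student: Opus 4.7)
The approach is to reduce the analysis to Theorem~\ref{thm-2-1} by passing to the reduced signed graph $\Gamma_{\mathrm{red}}$. The first step is to establish that adding or removing a twin vertex preserves both $i_+$ and $i_-$: if $v'$ is a twin of $v$, then switching so that $\sigma(vw)=\sigma(v'w)$ for every $w\in N(v)=N(v')$ makes the rows and columns of the adjacency matrix indexed by $v$ and $v'$ identical, with the $(v,v')$-entry equal to $0$ since twins are necessarily non-adjacent. Subtracting row/column $v$ from row/column $v'$ then provides a congruence transforming $A(\Gamma)$ into $A(\Gamma-v')\oplus (0)$, so Lemma~\ref{lem-2-0} yields $i_-(\Gamma)=i_-(\Gamma-v')$. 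Iterating, $i_-(\Gamma_{\mathrm{red}})=i_-(\Gamma)=2$.

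Next, since $g(\Gamma)=4$ forbids triangles and the vertex deletions in the reduction cannot create any, $\Gamma_{\mathrm{red}}$ is connected, triangle-free and reduced, so Theorem~\ref{thm-2-1} forces $\Gamma_{\mathrm{red}}$ to be isomorphic to one of $P_4^\sigma$, $P_5^\sigma$, unbalanced $C_4^\sigma$, balanced $C_5^\sigma$, unbalanced $C_6^\sigma$, $G_1^\sigma$ or $G_2^\sigma$. To separate the two cases I would show that every $4$-cycle created by adding a twin $v'$ of $v$ is balanced: such a cycle has the form $v\,w_1\,v'\,w_2\,v$ with $w_1,w_2\in N(v)$, and since cycle signs are switching-invariant, using the same switching as above gives cycle sign $\sigma(vw_1)^2\sigma(vw_2)^2=+$. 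Hence $\Gamma$ contains an unbalanced $4$-cycle if and only if $\Gamma_{\mathrm{red}}$ does, and among the seven candidates this occurs exactly for unbalanced $C_4^\sigma$, $G_1^\sigma$, $G_2^\sigma$ (the remaining four are either acyclic or have girth at least five), which yields parts (1) and (2). The converse direction of each equivalence is immediate from the twin-preservation identity in the first step.

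The main delicacy is executing the first step carefully: one must use the switching to equalize twin-edge signs \emph{before} performing the congruence, and verify that the affected diagonal entries remain zero under the row/column operations. After that, the remainder is essentially a mechanical reading-off from Theorem~\ref{thm-2-1}, with only the routine visual check that $G_1^\sigma$ and $G_2^\sigma$ in Fig.~\ref{fig-1} contain unbalanced $4$-cycles while the remaining candidates do not.
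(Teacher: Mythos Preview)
Your argument is correct and follows essentially the same route as the paper: reduce to $\Gamma_{\mathrm{red}}$, invoke Theorem~\ref{thm-2-1}, and separate the seven outcomes according to whether an unbalanced $4$-cycle is present, using that twin additions only produce balanced $4$-cycles. You are in fact more explicit than the paper, which simply asserts that ``adding twin vertices to $P_4^\sigma$, $P_5^\sigma$, balanced $C_5^\sigma$ and unbalanced $C_6^\sigma$ do not produce unbalanced $4$-cycle'' and leaves the twin-invariance of $i_-$ implicit. One small wording issue: not every $4$-cycle through the new twin $v'$ has the form $v\,w_1\,v'\,w_2\,v$; a cycle $v'\,w_1\,u\,w_2\,v'$ with $u\neq v$ is also possible, but it has the same sign as $v\,w_1\,u\,w_2\,v$, so it is not genuinely ``new'' and your conclusion that $\Gamma$ contains an unbalanced $4$-cycle iff $\Gamma_{\mathrm{red}}$ does remains valid.
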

\begin{proof}
 The sufficiency of (1) and (2) are clearly true by Theorem \ref{thm-2-1}. Conversely, note that adding twin vertices to $P_4^\sigma$, $P_5^\sigma$, balanced $C_5^\sigma$ and unbalanced $C_6^\sigma$ do not produce unbalanced 4-cycle. Then, (1) holds since unbalanced $C_4^\sigma$, $G_1^\sigma$ and $G_2^\sigma$ already have unbalanced 4-cycle. If $\Gamma$ contains no unbalanced 4-cycle, then $\Gamma$ must contain a balanced 4-cycle. In a similar way of (1) it can be shown (2).
\end{proof}

Let $C^\sigma$ be a signed cycle of $\Gamma$ and $v\in V(\Gamma)\setminus V(C^\sigma)$. Denoted by $d(v, x)$ the length of a shortest path between $v$ and $x$ and let $d(v, C^\sigma)=min\{d(v,x)\mid x\in V(C^\sigma)\}$. Defined
$$N_r(v,C^\sigma) =\left\{v\in V\left(\Gamma\right)\setminus V( C^\sigma)\mid d(v,C^\sigma)=r\right\},$$
where $r$ is a positive integer, and $|N_r(v, C^\sigma)|$ denote the number of vertices in $N_r(v, C^\sigma)$. The following two lemmas are proved in \cite{Fang.D1} for simple graphs. For completeness of this paper, we present the proof here for signed graphs by a similar manner.

\begin{lem}\label{lem-2-5}
Let $\Gamma=(G, \sigma)$ be a signed graph with girth $g$ and $C_g^\sigma$ be a shortest cycle of $\Gamma$. Suppose $y,y'\in V(C_g^\sigma)$ and there exists a path $P^\sigma$ of length $t$ from $y$ to $y'$ vertex-disjoint to $C_g^\sigma$. Then $\lceil\frac{g}{2}\rceil\leq t$. Furthermore, if $g\geq 5$, then each vertex in $V(\Gamma)\setminus V(C_g^\sigma)$ is adjacent to at most one vertex of $V(C_g^\sigma)$.
\end{lem}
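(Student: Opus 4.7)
The plan is to exploit the fact that any two internally vertex-disjoint paths between the same two endpoints glue together into a cycle. The vertices $y$ and $y'$ split $C_g^\sigma$ into two arcs of lengths $d$ and $g-d$ for some integer $d$. I first observe that $y\neq y'$ (otherwise $P^\sigma$ could not be a path of positive length), so $1\leq d\leq g-1$ and both arcs are nontrivial. Since $P^\sigma$ meets $C_g^\sigma$ only at its endpoints $y$ and $y'$, concatenating $P^\sigma$ with each arc produces a genuine cycle of $\Gamma$, of length $t+d$ and $t+(g-d)$ respectively.

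Because $g$ is the length of a shortest cycle of $\Gamma$, both $t+d\geq g$ and $t+(g-d)\geq g$ must hold. The first inequality yields $t\geq g-d$ and the second yields $t\geq d$, so $t\geq \max\{d,g-d\}\geq \lceil g/2\rceil$, which is the desired bound.

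For the ``furthermore'' clause, assume $g\geq 5$ and suppose, towards a contradiction, that some vertex $v\in V(\Gamma)\setminus V(C_g^\sigma)$ has two distinct neighbors $y,y'$ on $C_g^\sigma$. Then $y\,v\,y'$ is a path of length $2$ from $y$ to $y'$ internally disjoint from $C_g^\sigma$, so the first part forces $\lceil g/2\rceil\leq 2$, i.e.\ $g\leq 4$, contradicting $g\geq 5$.

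The argument is entirely combinatorial: no spectral or signed-graph machinery enters, since the girth depends only on the underlying unsigned graph. The only subtlety, which I do not expect to be a real obstacle, is verifying that the two closed walks formed from $P^\sigma$ and each arc of $C_g^\sigma$ really are cycles (rather than walks with repeated internal vertices); this is immediate from the hypothesis that $P^\sigma$ is internally disjoint from $C_g^\sigma$, together with the fact that each arc of $C_g^\sigma$ is itself a path.
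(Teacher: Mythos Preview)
Your proof is correct and follows essentially the same approach as the paper's: combine $P^\sigma$ with an arc of $C_g^\sigma$ to form a cycle, and invoke minimality of $g$. The only cosmetic difference is that the paper uses just the shorter arc (of length at most $\lfloor g/2\rfloor$) to obtain $t+\lfloor g/2\rfloor\geq g$ directly, whereas you use both arcs and take the maximum; the ``furthermore'' clause is argued identically.
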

\begin{proof}
There are two paths from $y$ to $y'$ in $C_g^\sigma$ and the length of the shorter one, say $P_1^\sigma$, at most $\lfloor\frac{g}{2}\rfloor$. We can found a cycle of length at most $\lfloor\frac{g}{2}\rfloor+t$ in $\Gamma$, which consist of $P_1^\sigma$ and $P^\sigma$. Then, $\lfloor\frac{g}{2}\rfloor+t\geq g$, and hence, the desired inequality holds. In addition, if $g\geq 5$ and $x\in V(\Gamma)\setminus V(\ C_g^\sigma)$ is adjacent to two vertices $y_1, y_2\in V(C_g^\sigma)$, then there exists a path of length $2$ from $y_1$ to $y_2$ vertex-disjoint to $C_g^\sigma$. By using the first assertion, we have $\lceil\frac{g}{2}\rceil\leq 2$. This contradicts the choice of $g$, and we are done.
\end{proof}

\begin{lem}\label{lem-2-7}
Let $\Gamma=(G,\sigma)$ be a signed graph with girth $g$ and $C_g^\sigma$ be a shortest cycle of $\Gamma$. If $i_-(\Gamma)=i_-(C_g^\sigma)$, then, for $r\geq 2$, we have $N_r(v,C_g^\sigma)=\emptyset$.
\end{lem}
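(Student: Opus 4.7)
The plan is to prove the contrapositive directly by contradiction: assume some $v \in V(\Gamma)$ satisfies $d(v, C_g^\sigma) = r \geq 2$ and construct an induced subgraph witnessing $i_-(\Gamma) > i_-(C_g^\sigma)$.

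First I would reduce to the case $r=2$. Pick a shortest path $v_0 v_1 \cdots v_r$ from $v_0 = v$ to $v_r \in V(C_g^\sigma)$. A standard ``prefix of a shortest path is shortest'' argument shows $d(v_i, C_g^\sigma) = r - i$ for every $i$; in particular $v_{r-2}$ lies in $N_2(\cdot, C_g^\sigma)$. So it is enough to derive a contradiction from the existence of some vertex $u$ with $d(u, C_g^\sigma) = 2$, together with a common neighbor $w$ of $u$ and a vertex $x \in V(C_g^\sigma)$.

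Next I would look at the induced signed subgraph $\Gamma' = \Gamma[V(C_g^\sigma) \cup \{u, w\}]$. Since $d(u, C_g^\sigma) = 2$, $u$ has no neighbor on $C_g^\sigma$, so its only neighbor inside $\Gamma'$ is $w$; thus $u$ is a pendant vertex of $\Gamma'$. Applying Lemma \ref{lem-2-2} to $u$ kills $u$ together with $w$, leaving the induced subgraph $\Gamma'[V(C_g^\sigma)]$. Because $C_g^\sigma$ is a shortest cycle, it has no chord (a chord would produce a strictly shorter cycle), so this residual subgraph is exactly $C_g^\sigma$. Lemma \ref{lem-2-2} then gives $i_-(\Gamma') = i_-(C_g^\sigma) + 1$, and Lemma \ref{lem-2-1-1} (monotonicity of $i_-$ under induced subgraphs) yields
\[
i_-(\Gamma) \;\geq\; i_-(\Gamma') \;=\; i_-(C_g^\sigma) + 1,
\]
contradicting the hypothesis $i_-(\Gamma) = i_-(C_g^\sigma)$.

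The only subtle point, and the one I would be careful to write out, is the justification that $v_{r-2}$ really does sit at distance $2$ (not smaller) from $C_g^\sigma$, and that $u$ has no edge to $V(C_g^\sigma)$ in $\Gamma'$; both follow immediately from the shortest-path/ minimum-distance choice, but they are what make the pendant-deletion step work cleanly. Beyond that, the proof is a two-line application of Lemmas \ref{lem-2-2} and \ref{lem-2-1-1} together with the chordlessness of a shortest cycle.
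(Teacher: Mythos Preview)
Your argument is correct and is essentially the paper's own proof: the paper also assumes $N_2(v,C_g^\sigma)\neq\emptyset$, picks $x'\in N_2$ with $x'\sim x\in N_1$, applies Lemmas~\ref{lem-2-1-1} and~\ref{lem-2-2} to $\Gamma[V(C_g^\sigma)\cup\{x',x\}]$ to get $i_-(\Gamma)\geq i_-(C_g^\sigma)+1$, and then notes that $N_2=\emptyset$ forces $N_r=\emptyset$ for all $r\geq 3$. Your write-up is simply more explicit about two points the paper leaves tacit---the chordlessness of a shortest cycle and the shortest-path prefix argument reducing $r\geq 2$ to $r=2$---but the underlying idea is identical.
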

\begin{proof}
On the contrary, assume that $x'\in N_2(v,C_g^\sigma)\neq \emptyset$ and $x'\sim x \in N_1(v,C_g^\sigma)$. Then, $i_-(\Gamma)\geq i_-(\Gamma[V(C_g^\sigma)\cup \{x',x\}])=i_-(C_g^\sigma)+1>i_-(C_g^\sigma)$ by Lemmas \ref {lem-2-1-1} and \ref {lem-2-2}, a contradiction. We conclude $N_2(v,C_g^\sigma)=\emptyset$, hence, $N_r(v,C_g^\sigma)=\emptyset$ for $r\geq 3$.
\end{proof}

\section{Connected signed graphs with given negative inertia index and given girth}
The following theorem establishes a fundamental inequality  relating the negative inertia index of signed graphs and it's girth.
\begin{thm}\label{thm-3-1}
Let $\Gamma=(G,\sigma)$ be a connected signed graph with girth $g$ and $C_g^\sigma$ be a shortest cycle in $\Gamma$. Then the inequality $i_-(\Gamma)\geq\lceil\frac{g}{2}\rceil-1$ holds. Equality holds if and only if $\Gamma$ satisfies one of the following conditions:
\begin{enumerate}[(1)]
\setlength{\itemsep}{0pt}
\item $\Gamma\cong C_g^\sigma$, where $C_g^\sigma$ is balanced and $g\equiv 0,1(\bmod\ 4)$, or unbalanced and $g\equiv 2,3(\bmod\ 4)$;
\item $\Gamma\cong (K_{n_1,n_2}, +)$, or $K^\sigma_{n_1,n_2,\ldots,n_l}$ $(l\geq 3)$, where $K^\sigma_{n_1,n_2,\ldots,n_l}$ contains only unbalanced signed 3-cycle.
\end{enumerate}
\end{thm}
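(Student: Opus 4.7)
The plan is to establish the lower bound via an interlacing argument against the induced shortest cycle, and then characterize equality by a case split on the girth $g$.

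For the inequality: $C_g^\sigma$ appears as an induced signed subgraph of $\Gamma$, so Lemma \ref{lem-2-1-1} gives $i_-(\Gamma)\geq i_-(C_g^\sigma)$, and inspecting the four parity-sign cases of Lemma \ref{lem-2-1}(1)--(2) shows $i_-(C_g^\sigma)\geq \lceil g/2\rceil-1$ in every case. Under equality, the same inspection forces $i_-(C_g^\sigma)=\lceil g/2\rceil-1$, which pinpoints $C_g^\sigma$ as balanced with $g\equiv 0,1\pmod{4}$ or unbalanced with $g\equiv 2,3\pmod{4}$; this already matches the cycle prescribed in conclusion (1).

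The bulk of the work is then to determine what can be attached to $C_g^\sigma$, which I would organize by $g$. For $g=3$, we have $i_-(\Gamma)=1$, and Lemma \ref{lem-2-6} restricts $\Gamma$ to $(K_{n_1,n_2}+sK_1,+)$ or $K^\sigma_{n_1,n_2,\ldots,n_l}+tK_1$ with $l\geq 3$ and all $3$-cycles unbalanced; connectedness forces $s=t=0$, and the existence of a triangle (since $g=3$) rules out the bipartite form, leaving conclusion (2). For $g=4$ the same lemma applies, but the triangle-free hypothesis leaves $\Gamma\cong(K_{n_1,n_2},+)$ with $n_1,n_2\geq 2$, again conclusion (2). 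For $g\geq 5$, Lemma \ref{lem-2-7} (applicable because $i_-(\Gamma)=i_-(C_g^\sigma)$) gives $N_r(v,C_g^\sigma)=\emptyset$ for every $r\geq 2$, so any vertex $x$ outside $C_g^\sigma$ must lie in $N_1(v,C_g^\sigma)$, and Lemma \ref{lem-2-5} then forces $x$ to have exactly one neighbor $y\in V(C_g^\sigma)$. Consequently $x$ is pendant in $\Gamma[V(C_g^\sigma)\cup\{x\}]$, and Lemmas \ref{lem-2-2} and \ref{lem-2-1}(3) yield
\[
i_-(\Gamma[V(C_g^\sigma)\cup\{x\}])=i_-(P_{g-1}^\sigma)+1=\left\lfloor\tfrac{g-1}{2}\right\rfloor+1=\left\lceil\tfrac{g}{2}\right\rceil,
\]
and interlacing gives $i_-(\Gamma)\geq \lceil g/2\rceil$, contradicting equality. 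Hence $\Gamma=C_g^\sigma$, which is conclusion (1).

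Sufficiency is then immediate: Lemma \ref{lem-2-1} confirms $i_-=\lceil g/2\rceil-1$ for the cycles of (1), and Lemma \ref{lem-2-6} gives $i_-=1=\lceil g/2\rceil-1$ for the graphs in (2) (whose girth is $3$ or $4$). The main technical obstacle I expect is the case $g\geq 5$: combining Lemmas \ref{lem-2-5}, \ref{lem-2-7} and \ref{lem-2-2} to rule out every vertex attached to the cycle, where the uniform parity identity $\lfloor(g-1)/2\rfloor+1=\lceil g/2\rceil$ is precisely what turns the pendant attachment into the strict-inequality contradiction. The small-girth cases $g\in\{3,4\}$ are essentially applications of Lemma \ref{lem-2-6}, modulo careful use of connectedness and the girth hypothesis.
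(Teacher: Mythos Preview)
Your proposal is correct and follows essentially the same route as the paper: the lower bound via interlacing against $C_g^\sigma$ together with Lemma~\ref{lem-2-1}, and for equality the pendant-vertex computation $i_-(P_{g-1}^\sigma)+1=\lceil g/2\rceil$ (via Lemmas~\ref{lem-2-2}, \ref{lem-2-5}, \ref{lem-2-7}) to rule out attachments, with Lemma~\ref{lem-2-6} handling the small-girth cases. The only difference is organizational: the paper splits first on ``$\Gamma$ is a cycle or not'' and then on $|N_{C_g^\sigma}(x)|$, whereas you split first on $g\in\{3,4\}$ versus $g\geq 5$; both reach the same conclusions through the same lemmas.
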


\begin{proof}
It is easy to see that $C_g^\sigma$ is an induced signed subgraph of $\Gamma$. Since $i_-(C_g^\sigma)\geq\lceil\frac{g}{2}\rceil-1$ by Lemma \ref{lem-2-1} and $i_-(\Gamma)\geq i_-(C_g^\sigma)$ by Lemma \ref{lem-2-1-1}, the equality $i_-(\Gamma)\geq\lceil\frac{g}{2}\rceil-1$ holds. Next, we treat the equality case.

The signed graphs in (1) and (2) satisfy the equality by Lemmas \ref{lem-2-1} and \ref{lem-2-6}. Conversely, let $\Gamma$ be a signed graph satisfying $i_-(\Gamma)=\lceil\frac{g}{2}\rceil-1$. Suppose that $\Gamma$ is a cycle. Then, $\Gamma\cong C_g^\sigma$, and $g\equiv 0,1(\bmod\ 4)$ if $\Gamma$ is balanced, $g\equiv 2,3(\bmod\ 4)$ if $\Gamma$ is unbalanced following from Lemma \ref{lem-2-1}. Hence we may assume that $\Gamma$ is not a cycle and set $x\in N_1(v,C_g^\sigma)\neq\emptyset$. Moreover, because $i_-(\Gamma)=i_-(C_g^\sigma)=\lceil\frac{g}{2}\rceil-1$, we have
$N_r(v,C_g^\sigma)=\emptyset$ for $r\geq 2$ following from Lemma \ref{lem-2-7}. Suppose that $N_{C_g^\sigma}(x)=\{y\}$. According to Lemmas \ref{lem-2-1-1}, \ref{lem-2-1} and \ref{lem-2-2}, a contradiction
$$i_-(\Gamma)\geq i_-(\Gamma[V(C_g^\sigma)\cup\{x\}])=i_-(P_{g-1}^\sigma)+1=\left \lfloor \frac{g-1}{2}\right \rfloor+1=\left \lceil \frac{g}{2} \right \rceil$$
arise. Suppose that $|N_{C_g^\sigma}(x)|\geq 2$ and let $y,y'\in N_{C_g^\sigma}(x)$. Note that there exists a path of length $2$ from $y$ to $y'$ vertex-disjoint to $C_g^\sigma$  in this case. We have $\lceil\frac{g}{2}\rceil\leq 2$ by virtue of Lemma \ref{lem-2-5}, which implies $g=3$ or 4. Hence $i_-(\Gamma)=\left\lceil\frac{g}{2}\right \rceil-1=1$. Therefore, by Lemma \ref{lem-2-6}, $\Gamma$ is isomorphic to $(K_{n_1,n_2}, +)$, or $K^\sigma_{n_1,n_2,\ldots,n_l}$ $(l\geq 3)$, where $K^\sigma_{n_1,n_2,\ldots,n_l}$ contains only unbalanced signed 3-cycle.
\end{proof}

Let $C^\sigma$ be the unique cycle of a signed unicyclic graph $\Gamma$. If $\Gamma$ satisfies (a) $N_r(v, C^\sigma)=\emptyset$ for $r\geq 2$;
(b) $N_1(v, C^\sigma)$ is an independent set, and each vertex of $N_1(v, C^\sigma)$ is adjacent to exactly one vertex of $V(C^\sigma)$, then $\Gamma$ is called a \emph{canonical signed unicyclic graph}. Set $H^\sigma$ is an induced signed subgraph of $\Gamma$. We call $H^\sigma$ a \emph{pendant star} if $H^\sigma$ is a star and its center is the only vertex which have exactly two neighbors in $C^\sigma$, this center vertex of $H^\sigma$ is called the \emph{major vertex}.

We given a way of depicting signed graphs that will be used in this section. Two vertices joining with a plain line (resp., dotted line) denotes that this edge is positive (resp., negative), and with a dashed line denotes that the positivity and negativity of this edge is uncertain. Two canonical signed unicyclic graphs $K_1^\sigma$ and $K_2^\sigma$ are given in Fig. \ref{fig-2}, where $K_1^\sigma$ has girth 7 and $i_-(K_1^\sigma)=\lceil \frac{7}{2}\rceil=4$, $K_2^\sigma$ has girth 8 and $i_-(K_2^\sigma)=\lceil \frac{8}{2}\rceil=4$. In addition, $K_1^\sigma[v, v_1, \ldots ,v_t]$ and $K_2^\sigma[v, v_1, \ldots ,v_t]$ are pendant stars of $K_1^\sigma$ and $K_2^\sigma$, respectively. By a method similar to that of \cite{Fang.D1}, we characterizes the canonical signed unicyclic graph $\Gamma$ with $i_-(\Gamma)=\lceil \frac{g}{2}\rceil$ in Theorem \ref{thm-3-2}.

	\begin{figure}[t]
		\centering 
		\subfigure[The canonical signed unicyclic graph with girth $7$ and $i_-(K_1^\sigma)=4$]{%
			\resizebox*{5cm}{!}{\includegraphics{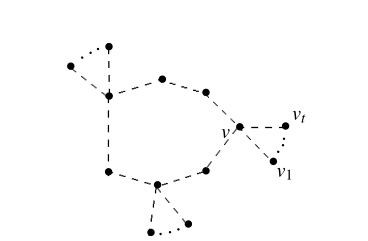}}}\hspace{5pt}
		\subfigure[The canonical signed unicyclic graph with girth $8$ and $i_-(K_2^\sigma)=4$]{%
			\resizebox*{5cm}{!}{\includegraphics{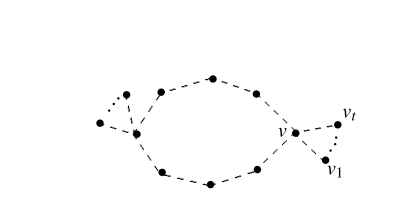}}}
		\caption{The  canonical signed unicyclic graphs $K_1^\sigma$ and $K_2^\sigma$} \label{fig-2}
	\end{figure}

\begin{thm}\label{thm-3-2}
Let $\Gamma$ be a canonical signed unicyclic graph with girth $g$ and the unique cycle $C_g^\sigma$. Then, the following statements hold:
\begin{enumerate}[(1)]
\setlength{\itemsep}{0pt}
\item If $\Gamma$ is a cycle, then $i_-(\Gamma)=\lceil \frac{g}{2}\rceil$ if and only if $\Gamma\cong C_g^\sigma$, where $C_g^\sigma$ is balanced and $g\equiv 2,3(\bmod\ 4)$, or unbalanced and $g\equiv 0,1(\bmod\ 4)$;
\item If $\Gamma$ is not a cycle and $g\equiv 1,3(\bmod\ 4)$, then $i_-(\Gamma)=\lceil \frac{g}{2}\rceil$ if and only if $\Gamma$ has one or more attached pendant stars such that exactly one path between any two major vertices of $V(C_g^\sigma)$ has even order;
\item If $\Gamma$ is not a cycle and $g\equiv 0,2(\bmod\ 4)$, then $i_-(\Gamma)=\lceil \frac{g}{2}\rceil$ if and only if $\Gamma$ has one or more attached pendant stars such that all paths between any two major vertices of $V(C_g^\sigma)$ have odd order.
\end{enumerate}
\end{thm}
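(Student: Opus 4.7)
The plan is as follows. Part (1) is a direct application of Lemma \ref{lem-2-1}(1),(2): reading off $i_-(C_g^\sigma)$ in the four cases (balanced or not) $\times$ (girth $\bmod\ 4$), the value $\lceil g/2\rceil$ occurs exactly in the two parity cases listed.

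For (2) and (3), assume $\Gamma$ is canonical and not a cycle. Let $u_1,\dots,u_k$ ($k\ge 1$) be the major vertices in cyclic order along $C_g^\sigma$, and for each $j$ let $n_j\ge 0$ be the number of vertices of $C_g^\sigma$ lying strictly between $u_j$ and $u_{j+1}$ (indices modulo $k$). Then $\sum_j n_j=g-k$, and the arc of $C_g^\sigma$ between consecutive major vertices $u_j,u_{j+1}$ (endpoints included) has order $n_j+2$. The key reduction is to peel $\Gamma$ down to signed paths via $k$ applications of Lemma \ref{lem-2-2}: at each $u_j$, pick a pendant neighbour $p_j$ of $u_j$ and delete the pair $\{u_j,p_j\}$, which drops both inertia indices by $1$; any remaining pendants of $u_j$ then become isolated vertices, contributing $0$ to $i_-$. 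After doing this at every major vertex, what remains is the disjoint union $P_{n_1}^\sigma\cup\cdots\cup P_{n_k}^\sigma$ together with isolated vertices, so Lemma \ref{lem-2-1}(3) yields
\[
i_-(\Gamma)=k+\sum_{j=1}^k\left\lfloor\frac{n_j}{2}\right\rfloor.
\]

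Converting this into a parity condition finishes the argument. Let $o$ count the indices $j$ with $n_j$ odd; then $\sum_j\lfloor n_j/2\rfloor=(g-k-o)/2$, so $i_-(\Gamma)=(g+k-o)/2$. Setting this equal to $\lceil g/2\rceil$ gives $o=k-1$ when $g$ is odd and $o=k$ when $g$ is even, i.e.\ exactly one $n_j$ is even in the odd case and every $n_j$ is odd in the even case. Since $n_j$ and $n_j+2$ share parity, this is precisely condition (2) when $g\equiv 1,3(\bmod\ 4)$ and condition (3) when $g\equiv 0,2(\bmod\ 4)$, so both equivalences are proved simultaneously.

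The main subtlety I expect is the bookkeeping inside the peeling step: verifying that, after each use of Lemma \ref{lem-2-2}, the remaining induced signed subgraph really decomposes as claimed. The degenerate cases to watch are $n_j=0$ (adjacent major vertices on $C_g^\sigma$, producing the empty $P_0^\sigma$), $k=1$ (a single major vertex, where one should check directly that (2) and (3) correctly reduce to ``the path $P_{g-1}^\sigma$ has the parity forced by $g$''), and a major vertex carrying several pendants, each surplus pendant then contributing an isolated vertex after the deletion of its unique neighbour.
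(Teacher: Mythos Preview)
Your proposal is correct and follows essentially the same route as the paper's own proof: both peel off one pendant--major pair at each of the $k$ major vertices via Lemma~\ref{lem-2-2}, reduce to the disjoint union $P_{l_1}^\sigma\cup\cdots\cup P_{l_k}^\sigma$ (your $n_j$ is exactly the paper's $l_j$), apply Lemma~\ref{lem-2-1}(3), and finish with the identical parity count. The only difference is cosmetic---the paper writes the count as $k-1=\sum_j\bigl(l_j-2\lfloor l_j/2\rfloor\bigr)$ (resp.\ $k=\sum_j\bigl(l_j-2\lfloor l_j/2\rfloor\bigr)$) rather than introducing your variable $o$, and does not spell out the degenerate cases you flag.
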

\begin{proof}
If $\Gamma$ is a cycle, then by Lemma \ref{lem-2-1}, $i_-(\Gamma)=\lceil \frac{g}{2}\rceil$ if and only if $\Gamma\cong C_g^\sigma$, where $C_g^\sigma$ is balanced and $g\equiv 2,3(\bmod\ 4)$, or unbalanced and $g\equiv 0,1(\bmod\ 4)$. Next, we consider the case which $\Gamma$ is not cycle.

Suppose that there are $k\geq 1$ pendant stars in $\Gamma$ and let $P_{l_1}^\sigma, \ldots, P_{l_k}^\sigma$ be the paths obtained upon removing the $k$ pendant stars from $\Gamma$. Then, $g=k+l_1 +\cdots +l_k$. On the other hand, because there are $k$ pendant stars in $\Gamma$, we have $k+i_-(P_{l_1}^\sigma)+\cdots +i_-(P_{l_k}^\sigma)=i_-(\Gamma)=\lceil \frac{g}{2}\rceil$ by Lemma \ref{lem-2-2}. Let us consider the following two cases:
{\flushleft\bf Case 1.} $g\equiv 1,3$ (mod 4).

Since $g$ is an odd, we have $i_-(\Gamma)=\lceil \frac{g}{2}\rceil$ if and only if
$$k+i_-(P_{l_1}^\sigma)+\cdots +i_-(P_{l_k}^\sigma)=i_-(\Gamma)=\lceil \frac{g}{2}\rceil=\frac{g+1}{2}=\frac{k+l_1 +\cdots +l_k+1}{2}.$$
Then,
$$k-1=[l_1-2i_-(P_{l_1}^\sigma)]+[l_2-2i_-(P_{l_2}^\sigma)]+\cdots +[l_k-2i_-(P_{l_k}^\sigma)].$$
According to Lemma \ref{lem-2-1}, $l_j-2i_-(P_{l_j}^\sigma)=0$ if $l_j$ is even and $l_j-2i_-(P_{l_j}^\sigma)=1$ if $l_j$ is odd for $j\in \{1,\ldots,k\}$. Consequently,  $i_-(\Gamma)=\lceil \frac{g}{2}\rceil$ if and only if one of $l_1, \ldots, l_k$ is even.

{\flushleft\bf Case 2.} $g\equiv 0, 2$ (mod 4).

Since $g$ is an even, we have $i_-(\Gamma)=\lceil \frac{g}{2}\rceil$ if and only if
$$k+i_-(P_{l_1}^\sigma)+\cdots +i_-(P_{l_k}^\sigma)=i_-(\Gamma)=\lceil \frac{g}{2}\rceil=\frac{g}{2}=\frac{k+l_1 +\cdots +l_k}{2}.$$
Then,
$$k=[l_1-2i_-(P_{l_1}^\sigma)]+[l_2-2i_-(P_{l_2}^\sigma)]+\cdots +[l_k-2i_-(P_{l_k}^\sigma)].$$
By a similar discussion as Case 1, $i_-(\Gamma)=\lceil \frac{g}{2}\rceil$ holds if and only if all of $l_1,\ldots, l_k$ are odd. It follows the desired conclusion.
\end{proof}

For those who are not canonical signed unicyclic graphs, we can only characterize signed graph $\Gamma$ with $i_-(\Gamma)=\lceil\frac{g}{2}\rceil$ and $g\geq 4$.

\begin{figure}[t]
    \centering
    \includegraphics[width=0.75\linewidth]{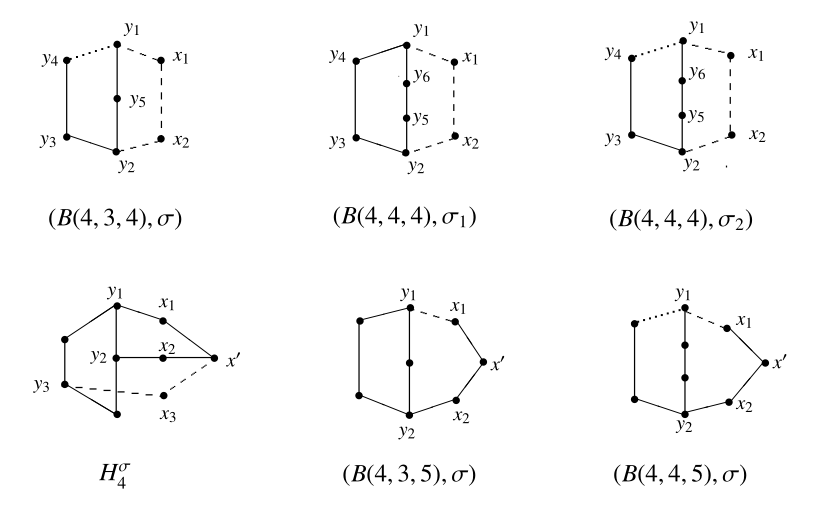}
    \caption{The signed graphs $(B(4,3,4),\sigma)$, $(B(4,4,4),\sigma_1)$, $(B(4,4,4),\sigma_2)$, $H_4^\sigma$, $(B(4,3,5),$ $\sigma)$ and $(B(4,4,5),\sigma)$}
    \label{fig-3}
    \end{figure}

\begin{thm}\label{thm-3-3}
Let $\Gamma=(G,\sigma)$ be a connected signed graph with girth $g\geq 4$, and there exists a cycle $C_g^\sigma$ satisfying $i_-(C_g^\sigma)=\lceil\frac{g}{2}\rceil$. Suppose that $\Gamma$ is not canonical signed unicyclic graph. Then $i_-(\Gamma)=\lceil\frac{g}{2}\rceil$ if and only if $\Gamma$ is isomorphic to one of the following signed graphs:
\begin{enumerate}[(1)]
\setlength{\itemsep}{0pt}
\item The signed graphs obtained from unbalanced $C_4^\sigma$, $G_1^\sigma$ or $G_2^\sigma$ by adding twin vertices;
\item The signed graphs $\Gamma_1, \Gamma_2$ and $\Gamma_3$ (see Fig. \ref{fig-5}).
\end{enumerate}
\end{thm}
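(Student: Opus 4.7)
The plan is to prove sufficiency by direct verification and then attack necessity via a case split on $g=4$ versus $g\geq 5$, with the structural Lemmas \ref{lem-2-5} and \ref{lem-2-7} doing most of the work in the larger-girth case.

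For sufficiency, the graphs in (1) are obtained from unbalanced $C_4^\sigma$, $G_1^\sigma$, or $G_2^\sigma$ by adding twin vertices; since duplicated rows/columns of $A(\Gamma)$ contribute only to the nullity, each such graph inherits $i_-=2=\lceil 4/2\rceil$ from its parent, and these parents have $i_-=2$ by Theorem \ref{thm-2-1}. The graphs $\Gamma_1,\Gamma_2,\Gamma_3$ are small and explicit, so I would verify $i_-(\Gamma_i)=\lceil g/2\rceil$ either by computing the spectrum directly or by iteratively stripping pendant vertices via Lemma \ref{lem-2-2} until a signed cycle remains and then invoking Lemma \ref{lem-2-1}.

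For necessity when $g=4$: the hypothesis $i_-(C_4^\sigma)=\lceil 4/2\rceil=2$ together with Lemma \ref{lem-2-1} forces $C_4^\sigma$ to be unbalanced, so $\Gamma$ contains an unbalanced $4$-cycle; Corollary \ref{cor-1}(1) applied to $\Gamma$ with $i_-(\Gamma)=2$ yields exactly the description in (1). For necessity when $g\geq 5$: Lemma \ref{lem-2-7} applied to the chosen $C_g^\sigma$ (whose $i_-$ already equals $\lceil g/2\rceil = i_-(\Gamma)$) gives $N_r(v,C_g^\sigma)=\emptyset$ for every $r\geq 2$, so $V(\Gamma)=V(C_g^\sigma)\cup N_1(v,C_g^\sigma)$; and Lemma \ref{lem-2-5} guarantees that each vertex of $N_1(v,C_g^\sigma)$ is adjacent to exactly one vertex of the cycle. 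Because $\Gamma$ is not a canonical signed unicyclic graph, $N_1(v,C_g^\sigma)$ cannot be independent, so there exist $x,x'\in N_1(v,C_g^\sigma)$ with $x\sim x'$; writing $x\sim y$ and $x'\sim y'$ on $C_g^\sigma$, the path $yxx'y'$ has length $3$ and is vertex-disjoint from $C_g^\sigma$ (note $y\neq y'$, otherwise $g=3$). Lemma \ref{lem-2-5} then forces $\lceil g/2\rceil\leq 3$, hence $g\in\{5,6\}$, and Lemma \ref{lem-2-1} further pins down $C_5^\sigma$ to be unbalanced and $C_6^\sigma$ to be balanced under the standing hypothesis $i_-(C_g^\sigma)=\lceil g/2\rceil$.

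The main obstacle, and the bulk of the work, is the finite case analysis for $g=5$ and $g=6$: one has to enumerate the possible configurations of $N_1(v,C_g^\sigma)$ and the edges inside it, using repeatedly that (i) every additional cycle produced by such an edge is subject to the length constraint of Lemma \ref{lem-2-5}, (ii) by Lemma \ref{lem-2-1-1} no induced signed subgraph may have negative inertia exceeding $\lceil g/2\rceil$, and (iii) Lemma \ref{lem-2-2} forbids pendant extensions that would force an extra negative eigenvalue (which also restricts how many ``extra'' vertices in $N_1(v,C_g^\sigma)$ can be attached at a given cycle vertex, since two such vertices would become twins). In each surviving configuration the signs around the newly created cycles are forced up to switching by Lemma \ref{lem-2-11} and Theorem \ref{thm-1-1}, leaving exactly $\Gamma_1$, $\Gamma_2$, and $\Gamma_3$. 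The verification that no other signed graph survives this sieve is routine but long; the delicate point is correctly isolating the sign patterns around the extra cycle(s) so that $i_-(\Gamma)$ remains equal to $\lceil g/2\rceil$ rather than jumping to $\lceil g/2\rceil+1$.
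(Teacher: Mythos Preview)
Your plan is correct and mirrors the paper's argument closely: the same reduction via Lemmas \ref{lem-2-5} and \ref{lem-2-7} to $g\in\{4,5,6\}$, the same appeal to Corollary \ref{cor-1}(1) for $g=4$, and the same recognition that what remains is a finite sign-and-configuration check. The paper carries out that check by computing $\det A(\Gamma)$ for the seven-vertex graph $(B(4,3,4),\sigma)$ when $g=5$ (isolating the sign pattern that gives $\Gamma_1$, then extending to $\Gamma_2,\Gamma_3$ via induced-subgraph bounds exactly as you outline), and by an explicit congruence transformation of $A((B(4,4,4),\sigma_1))$ showing $i_-=4$ for every sign choice when $g=6$, so the balanced $C_6^\sigma$ branch contributes nothing.
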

\begin{proof}
The connected signed graph $\Gamma$ with girth $g\geq 4$ satisfies $i_-(\Gamma)=\lceil\frac{g}{2}\rceil=2$ if and only if $g=4$. Because there exists an unbalanced 4-cycle in $\Gamma$ due to hypothesis. By Corollary \ref{cor-1} (1), $\Gamma$ can be obtained from unbalanced $C_4^\sigma$, $G_1^\sigma$ or $G_2^\sigma$ by adding twin vertices. It suffice to show the case $g\geq 5$ as follows.

The signed graphs shown in (2) satisfy the equality by a simple operation. Now let $\Gamma=(G,\sigma)$ be a connected signed graph with girth $g\geq 5$. Since there exists a cycle $C_g^\sigma$ in $\Gamma$ satisfying $i_-(C_g^\sigma)=\lceil\frac{g}{2}\rceil=i_-(\Gamma)$, we have $N_r(v,C_g^\sigma)$ $=\emptyset$ for $r\geq 2$ due to Lemma \ref{lem-2-7}. Furthermore, by Lemma \ref{lem-2-5}, the vertex of $N_1(v,C^\sigma_g)=V(\Gamma)\setminus V(C^\sigma_g)$ is adjacent to exactly one vertex of $V(C^\sigma_g)$. Hence $N_1(v,C_g^\sigma)$ is not an independent set since otherwise $\Gamma$ is a canonical signed unicyclic graph. We may assume that $x_1$ is adjacent to $x_2$ in $N_1(v,C_g^\sigma)$ and set $x_1\sim y_1\in V(C_g^\sigma)$ and $x_2\sim y_2\in V(C_g^\sigma)$. Now there exists a path of length 3 from $y_1$ to $y_2$ vertex-disjoint to $C_g^\sigma$. Then $\lceil\frac{g}{2}\rceil\leq 3$ by Lemma \ref{lem-2-5}, and so $g\leq 6$. Note that the condition $i_-(C_g^\sigma)=\lceil\frac{g}{2}\rceil$ implies that $g\equiv 2,3(\bmod\ 4)$ if $C_g^\sigma$ is balanced and $g\equiv 0,1(\bmod\ 4)$ if $C_g^\sigma$ is unbalanced by Lemma \ref{lem-2-1}. The following two cases can be distinguish:

{\flushleft\bf Case 1.} $g=5$ and $C_5^\sigma$ is unbalanced.

We may assume that exactly one edge of $C_5^\sigma$, say $y_1y_4$, is negative by Lemma \ref{lem-2-11}. If $N_1(v,C_5^\sigma)=\{x_1, x_2\}$, then $\Gamma\cong (B(4,3,4), \sigma)$ (see Fig. \ref{fig-3}). Note that the induce signed subgraph $\Gamma[V(C_5^\sigma)\cup \{x_1\}]$ of $\Gamma$ has three positive and negative eigenvalues, respectively. We obtain $i_+(\Gamma)\geq 3$ and $i_-(\Gamma)\geq 3$ by Lemma \ref{lem-2-1-1}.  In order of $y_1, y_2, y_3, y_4, y_5, x_1,x_2$, $A(\Gamma)$ can be written as

$$A(\Gamma)=\left(
        \begin{array}{ccccccc}
          0 & 0 & 0 & -1 & 1 & a_{16}^\sigma & 0\\
          0 & 0 & 1 & 0 & 1 & 0 & a_{27}^\sigma\\
          0 & 1 & 0 & 1 & 0 & 0 & 0\\
          -1 & 0 & 1 & 0 & 0 & 0 & 0\\
          1 & 1 & 0 & 0 & 0 & 0 & 0\\
          a_{16}^\sigma & 0 & 0 & 0 & 0 & 0 & a_{67}^\sigma\\
          0 & a_{27}^\sigma & 0 & 0 & 0 & a_{67}^\sigma & 0\\
        \end{array}
      \right).$$
By a simple operation, we get $det(A(\Gamma))=2(a_{67}^\sigma)^2-2a_{16}^\sigma a_{27}^\sigma a_{67}^\sigma$. Then $i_-(\Gamma)=4>\lceil \frac{5}{2}\rceil$ if $a_{16}^\sigma a_{27}^\sigma a_{67}=-1$ and $i_-(\Gamma)=3=\lceil \frac{5}{2}\rceil$ if $a_{16}^\sigma a_{27}^\sigma a_{67}^\sigma=1$ by Lemma \ref{lem-2-0-0}. Therefore, $\Gamma$ is switching equivalent to $\Gamma_1$ (see Fig. \ref{fig-5}).

If $N_1(v,C_5^\sigma)\setminus\{x_1, x_2\}\not=\emptyset$, then $\Gamma[V(C_5^\sigma)\cup\{x_1,x_2\}]\cong \Gamma_1$ by the above discussion. Assume that any vertex $x$ of $N_1(v,C_5^\sigma)\setminus \{x_1, x_2\}$ is adjacent to neither $x_1$ nor $x_2$. Then $x$ can only be adjacent to $y_5\in V(C_5^\sigma)$ since otherwise $i_-(\Gamma)\geq i_-(\Gamma[V(\Gamma_1)\cup \{x\}])=4>\lceil\frac{5}{2}\rceil$, which implies that $N_1(v,C_5^\sigma)\setminus \{x_1, x_2\}$ is an independent set and $\Gamma\cong \Gamma_2$ (see Fig. \ref{fig-5}). Now let $x_3\in N_1(v,C_5^\sigma)\setminus \{x_1, x_2\}$ is adjacent to one of $x_1$ and $x_2$, say $x_2$. Then $x_3\not\sim x_1$ and $x_3\sim y_4$. Furthermore, we have $\sigma(x_3x_2)\cdot \sigma(x_3y_4)=+$ since otherwise $i_-(\Gamma)\geq i_-(\Gamma[V(C_5^\sigma\cup \{x_2, x_3\}])=4>\lceil\frac{5}{2}\rceil$. Without loss of generality, set $\sigma(x_3x_2)=\sigma(x_3y_4)=+$. We claim $N_1(v,C_g^\sigma)=\{x_1, x_2, x_3\}$ as follows. Suppose not, let $x_4\in N_1(v,C_g^\sigma)\setminus \{x_1, x_2, x_3\}$. Then $x_4$ is adjacent exactly one of $x_1$ and $x_3$, say $x_3$, since otherwise a contradiction $i_-(\Gamma)\geq i_-(\Gamma[V(C_5^\sigma)\cup \{x_1, x_2, x_3, x_4\}])=4>3=\lceil\frac{5}{2}\rceil$ or $g\leq 4$ arise. Now $x_4\sim y_5$ and $\sigma(x_4x_3)=\sigma(x_4y_5)=+$ by above discussion. Therefore, $\Gamma[V(y_1,\ldots, y_5, x_1,\ldots, x_4]\cong H_1^\sigma$ (see Fig. \ref{fig-4}), which leads to a contradiction $i_-(\Gamma)\geq i_-(H_1^\sigma)=4>\lceil\frac{5}{2}\rceil$ by Lemma \ref{lem-2-1-1}. Hence, $N_1(v,C_g^\sigma)=\{x_1, x_2, x_3\}$, and so $\Gamma\cong\Gamma_3$ (see Fig. \ref{fig-5}).

{\flushleft\bf Case 2.} $g=6$ and $C_6^\sigma$ is balanced.

We may assume that the sign of each edge in $C_6^\sigma$ is positive by Theorem \ref{thm-1-1}. Then $\Gamma[V(C_6^\sigma)\cup\{x_1,x_2\}]\cong (B(4,4,4), \sigma_1)$ (see Fig. \ref{fig-3}). In order of $y_1, y_4, y_3, y_2, y_5, y_6, x_1,$ $x_2$, $A((B(4,4,4), \sigma_1))$ can be written as
$$A((B(4,4,4), \sigma_1))=\left(
        \begin{array}{cccccccc}
          0 & 1 & 0 & 0 & 0 & 1 & a_{17}^\sigma & 0\\
          1 & 0 & 1 & 0 & 0 & 0 & 0 & 0\\
          0 & 1 & 0 & 1 & 0 & 0 & 0 & 0\\
          0 & 0 & 1 & 0 & 1 & 0 & 0 & a_{48}^\sigma\\
          0 & 0 & 0 & 1 & 0 & 1 & 0 & 0\\
          1 & 0 & 0 & 0 & 1 & 0 & 0 & 0\\
          a_{17}^\sigma & 0 & 0 & 0 & 0 & 0 & 0 & a_{78}^\sigma\\
          0 & 0 & 0 & a_{48}^\sigma & 0 & 0 & a_{78}^\sigma & 0\\
        \end{array}
      \right).$$
Applying elementary congruence matrix operations on $A((B(4,4,4), \sigma_1))$, we get that $A$ $((B(4,4,4),\sigma_1))$ is congruent to
$$B=\left(
        \begin{array}{cccccccc}
          0 & 1 & 0 & 0 & 0 & 0 & 0 & 0\\
          1 & 0 & 0 & 0 & 0 & 0 & 0 & 0\\
          0 & 0 & 0 & 1 & 0 & 0 & 0 & 0\\
          0 & 0 & 1 & 0 & 0 & 0 & 0 & 0\\
          0 & 0 & 0 & 0 & 0 & 2 & 0 & 0\\
          0 & 0 & 0 & 0 & 2 & 0 & 0 & 0\\
          0 & 0 & 0 & 0 & 0 & 0 & 0 & a_{78}^\sigma+\frac{a_{17}^\sigma a_{48}^\sigma}{2}\\
          0 & 0 & 0 & 0 & 0 & 0 & a_{78}^\sigma+\frac{a_{17}^\sigma a_{48}^\sigma}{2} & 0\\
        \end{array}
      \right).$$
The matrix $B$ has four negative eigenvalues by a simple observation. Then $i_-(\Gamma)\geq i_-(B(4,$ $4,4),\sigma_1)=4>\lceil\frac{6}{2}\rceil=3$ following from Lemma \ref{lem-2-0}, a contradiction.
\end{proof}

\begin{figure}[t]
    \centering
    \includegraphics[width=0.8\linewidth]{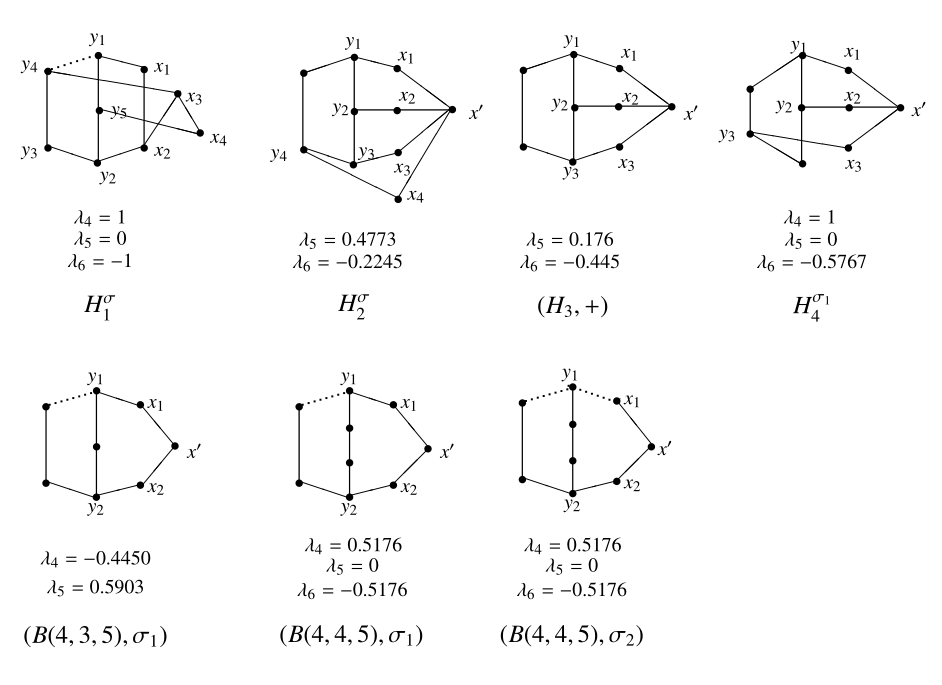}
    \caption{The signed graphs $H_1^\sigma$, $H_2^\sigma$, $(H_3,+)$, $H_4^{\sigma_1}$, $(B(4,3,5),\sigma_1)$, $(B(4,4,5),\sigma_1)$\ and\ $(B(4,4,5),\sigma_2)$.}
    \label{fig-4}
    \end{figure}
\begin{figure}[t]
    \centering
    \includegraphics[width=0.95\linewidth]{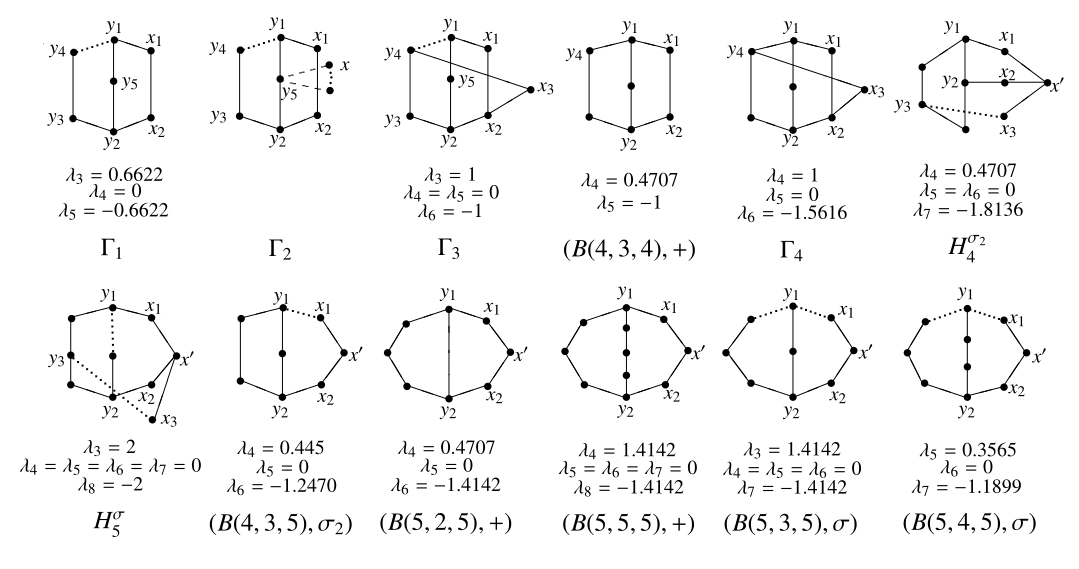}
    \caption{The signed graphs $\Gamma_1$, $\Gamma_2$, $\Gamma_3$, $(B(4,3,4),+)$, $\Gamma_4$, $H_4^{\sigma_2}$, $H_5^\sigma$, $(B(4,3,5),\sigma_2)$, $(B(5,2,5),$ $+)$, $(B(5,5,5),+)$, $(B(5,3,5),\sigma)$\ and\ $(B(5,4,5),\sigma)$.}
    \label{fig-5}
    \end{figure}
\begin{thm}\label{thm-3-4}
Let $\Gamma=(G,\sigma)$ be a connected signed graph with girth $g\geq 4$, and there exists no signed cycle $C_g^\sigma$ of $\Gamma$ satisfying $i_-(C_g^\sigma)=\lceil\frac{g}{2}\rceil$. Suppose that $\Gamma$ is not a canonical signed unicyclic graph. Then $i_-(\Gamma)=\lceil\frac{g}{2}\rceil$ if and only if $\Gamma$ is isomorphic to one of the following signed graphs:
\setlength{\itemsep}{0pt}
\item (1) The signed graphs with girth 4 obtained from $P_4^\sigma$, $P_5^\sigma$, balanced $C_5^\sigma$ or unbalanced $C_6^\sigma$ by adding twin vertices;
\item (2) The signed graphs obtained by joining a vertex of signed cycle $C_g^\sigma$ to the center of a star $K_{1,r}$, where $g\equiv 0,1(\bmod\ 4)$ if $C_g^\sigma$ is balanced and $g\equiv 2,3(\bmod\ 4)$ if $C_g^\sigma$ is unbalanced;
\item (3) $(B(4,3,4),+)$, $\Gamma_4$, $H_4^{\sigma_2}$, $H_5^{\sigma}$, $(B(4,3,5),\sigma_2)$, $(B(5,2,5),+)$, $(B(5,5,5),+)$, $(B(5,3,5),\sigma)$ and $(B(5,4,5),\sigma)$ (see Fig. \ref{fig-5}).
\end{thm}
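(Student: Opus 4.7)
The plan is to convert the hypothesis into a single clean equation relating $i_-$ to the shortest cycle, and then carry out a structural case analysis that reduces the problem to a finite matrix computation. By Lemma \ref{lem-2-1}, the assumption that no $g$-cycle $C_g^\sigma$ of $\Gamma$ achieves $i_-(C_g^\sigma)=\lceil\frac{g}{2}\rceil$ forces $i_-(C_g^\sigma)=\lceil\frac{g}{2}\rceil-1$ for every shortest cycle; combined with Lemma \ref{lem-2-1-1} and the standing assumption $i_-(\Gamma)=\lceil\frac{g}{2}\rceil$, this yields $i_-(\Gamma)=i_-(C_g^\sigma)+1$, so $\Gamma$ carries exactly one extra negative eigenvalue beyond a shortest cycle. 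Moreover $C_g^\sigma$ is balanced when $g\equiv 0,1\,(\bmod\,4)$ and unbalanced when $g\equiv 2,3\,(\bmod\,4)$.

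The sufficiency direction is done family by family. Part (1) is Corollary \ref{cor-1}(2). For part (2), iterated application of Lemma \ref{lem-2-2} to the $r$ leaves of the attached star, followed by deleting the centre together with its unique neighbour on the cycle, leaves $P_{g-1}^\sigma$, and Lemma \ref{lem-2-1}(3) gives $i_-(\Gamma)=\lfloor\frac{g-1}{2}\rfloor+1=\lceil\frac{g}{2}\rceil$. Part (3) is handled graph by graph by the same bordered-block congruence template already used for $(B(4,4,4),\sigma_1)$ in the proof of Theorem \ref{thm-3-3}. For necessity at $g=4$, Lemma \ref{lem-2-1}(2) shows the hypothesis rules out every unbalanced $4$-cycle in $\Gamma$, whence Corollary \ref{cor-1}(2) directly gives case (1).

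For necessity at $g\geq 5$, Lemma \ref{lem-2-5} forces each vertex of $V(\Gamma)\setminus V(C_g^\sigma)$ to have exactly one neighbour on the cycle, and I distinguish two structural cases. \emph{Case A:} some $N_r(v,C_g^\sigma)$ with $r\geq 2$ is nonempty. Choosing $x'\in N_2$ with $x'\sim x\in N_1$ and $x\sim y\in V(C_g^\sigma)$, Lemma \ref{lem-2-2} applied to the pendant $x'$ gives $i_-(\Gamma[V(C_g^\sigma)\cup\{x,x'\}])=i_-(P_{g-1}^\sigma)+1=\lceil\frac{g}{2}\rceil$, already saturating the budget. A short bookkeeping argument using Lemmas \ref{lem-2-1-1}, \ref{lem-2-2} and \ref{lem-2-1} then forbids: a second $N_1$-vertex attached at a different cycle vertex (two rounds of pendant deletion cost $+1$ over $P_{g-1}^\sigma$); a second $N_1$-vertex attached at $y$; an extra $N_2$-vertex anchored at an $N_1$-element distinct from $x$; and any vertex in $N_r$ with $r\geq 3$. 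What survives is $C_g^\sigma$ with a single star $K_{1,r}$ joined through its centre to a single cycle vertex, i.e.\ case (2). \emph{Case B:} $N_r=\emptyset$ for all $r\geq 2$ but $N_1$ carries an edge $x_1x_2$ (otherwise $\Gamma$ is canonical signed unicyclic, against the standing hypothesis). Writing $x_i\sim y_i\in V(C_g^\sigma)$, the path $y_1x_1x_2y_2$ of length $3$ is vertex-disjoint from $C_g^\sigma$, so Lemma \ref{lem-2-5} forces $\lceil\frac{g}{2}\rceil\leq 3$, whence $g\in\{5,6\}$.

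The main obstacle is the finite but intricate enumeration in Case B. I would first normalize the signs on $C_5^\sigma$ via Theorem \ref{thm-1-1} and those on $C_6^\sigma$ via Lemma \ref{lem-2-11} (after noting that any switching supported on $V(C_g^\sigma)$ extends to all of $\Gamma$ without changing the inertia indices), and then enumerate by hand the number and location of $N_1$-vertices, the pairs joined by edges inside $N_1$, and the signs on those extra edges. For each configuration I assemble $A(\Gamma)$ as a bordered block matrix and run the elementary congruence reduction used for $(B(4,4,4),\sigma_1)$ in the proof of Theorem \ref{thm-3-3}, reading off $i_-(\Gamma)$ via Lemmas \ref{lem-2-0} and \ref{lem-2-0-0}; a configuration containing an induced $H_1^\sigma$ or other forbidden subgraph is discarded immediately via Lemma \ref{lem-2-1-1}. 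The surviving configurations, unwound under arbitrary switching, yield exactly the finite list in (3). The delicate points are making the Case A rigidity argument genuinely airtight against every possible attachment, and ensuring that the Case B enumeration is exhaustive rather than only up to the chosen switching normalization; both demand careful bookkeeping rather than a single sharp trick.
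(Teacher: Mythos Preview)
Your Case A analysis contains a genuine gap that causes you to miss most of the graphs in item (3). You claim that once $N_2(v,C_g^\sigma)\neq\emptyset$ and the induced subgraph on $V(C_g^\sigma)\cup\{x,x'\}$ already saturates the budget $\lceil g/2\rceil$, a ``short bookkeeping argument'' forbids any second vertex $x_2\in N_1$ attached at a different cycle vertex. But your pendant-deletion bookkeeping silently assumes that $x_2$ (or its companion in $N_2$) is pendant in the relevant induced subgraph. This fails precisely when $x_2$ is adjacent to the \emph{same} vertex $x'\in N_2$: then $x'$ has two neighbours in $N_1$, no vertex is pendant, Lemma \ref{lem-2-2} does not fire, and in fact many such configurations \emph{do} satisfy $i_-(\Gamma)=\lceil g/2\rceil$. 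Concretely, $(B(5,5,5),+)$ has $g=8$, a single vertex $x'$ at distance $2$ from $C_8^\sigma$ adjacent to two distinct $N_1$-vertices attached at different cycle vertices, and $i_-=4=\lceil 8/2\rceil$; likewise $H_4^{\sigma_2}$, $H_5^\sigma$, $(B(4,3,5),\sigma_2)$, $(B(5,2,5),+)$, $(B(5,3,5),\sigma)$ and $(B(5,4,5),\sigma)$ all arise in your Case A, not in your Case B. Your Case B (which forces $g\in\{5,6\}$) can only produce $(B(4,3,4),+)$ and $\Gamma_4$.

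The paper's proof has the same top-level split ($N_2=\emptyset$ versus $N_2\neq\emptyset$), but in the $N_2\neq\emptyset$ branch it does \emph{not} try to reduce everything to the star-on-cycle picture. Instead it proves three structural claims (independence of $N_2$; every $N_1$-vertex meets $N_2$; a single $x'\in N_2$ has at most three $N_1$-neighbours and if it has two then $|N_2|=1$; $N_r=\emptyset$ for $r\geq 3$) and then branches on $d_{N_1}(x')\in\{1,2,3\}$. Only the $d_{N_1}(x')=1$ subcase collapses to the star-on-cycle of item (2); the subcases $d_{N_1}(x')=2,3$ force $g\leq 8$ via Lemma \ref{lem-2-5} and generate the remaining seven graphs of item (3) after a finite sign check. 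To repair your argument you must insert exactly this trichotomy on the degree of $x'$ inside $N_1$ before invoking any rigidity.
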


\begin{proof}
The connected signed graph $\Gamma$ with girth $g\geq 4$ satisfies $i_-(\Gamma)=\lceil\frac{g}{2}\rceil=2$ if and only if $g=4$. Because $\Gamma$ contains no unbalanced 4-cycle due to the hypothesis. By Corollary \ref{cor-1} (2), $\Gamma$ can be obtained from $P_4^\sigma$, $P_5^\sigma$, balanced $C_5^\sigma$ or unbalanced $C_6^\sigma$ by adding twin vertices. It suffice to show the case $g\geq 5$ as follows.

The signed graphs shown in (3) satisfy the equality by a simple operation. Suppose that $\Gamma$ is signed graph shown in (2). By deleting a pendant vertex and center vertex of $K_{1,r}$ from $\Gamma$, we have
$$i_-(\Gamma)=i_-(C_g^\sigma)+1=(\lceil\frac{g}{2}\rceil-1)+1=\lceil\frac{g}{2}\rceil$$
due to Lemmas \ref{lem-2-1} and \ref{lem-2-2}. Conversely, let $\Gamma=(G,\sigma)$ be a connected signed graph with girth $g\geq 5$ and we consider the following two cases:
{\flushleft\bf Case 1.} $N_2(v,C_g^\sigma)=\emptyset$.

Since $g\geq 5$, any vertex of $N_1(v,C_g^\sigma)$ is adjacent to exactly one vertex of $V(C_g^\sigma)$ by Lemma \ref{lem-2-5}. Then
$\Gamma[N_1(v,C_g^\sigma)]$ has at least one edge, say $x_1x_2$, because $\Gamma$ is not canonical signed unicyclic graph. Set $x_1\sim y_1\in V(C_g^\sigma)$ and $x_2\sim y_2\in V(C_g^\sigma)$. Now there exists a path of length 3 from $y_1$ to $y_2$ vertex-disjoint to $C_g^\sigma$. Appealing again to Lemma \ref{lem-2-5}, we have $\lceil\frac{g}{2}\rceil\leq 3$. The following two subcases can be consider:

{\flushleft\bf Subcase 1.}  $g=5$ and there exists a balanced $C_5^\sigma$ in $\Gamma$.

Clearly, $\Gamma[V(C_5^\sigma)\cup \{x_1, x_2\}]\cong (B(4, 3, 4),+)$ (see Fig. \ref{fig-5}) since $\Gamma$ contains no unbalanced 5-cycle. Assume that $N_1(v, C_5^\sigma)\setminus\{x_1, x_2\}=\emptyset$. Then, $\Gamma\cong (B(4, 3, 4), +)$, as desired. Assume that $x_3\in N_1(v,C_5^\sigma)\setminus\{x_1, x_2\}\neq\emptyset$. If $x_3$ is not adjacent to $x_1$ and $x_2$ at the same time, then, by a simple observation, a contradiction $i_-(\Gamma)\geq i_-(\Gamma[V(C_5^\sigma)\cup \{x_1,x_2,x_3\}])=4>3=\lceil\frac{5}{2}\rceil$ arise no matter what vertex of $V(C_5^\sigma)$ is adjacent to $x_3$. Hence, $x_3$ is adjacent to one of $x_1$, $x_2$, say $x_2$, and $x_3\sim y_4$ since $g=5$. We claim that
$N_1(v,C_5^\sigma)=\{x_1, x_2, x_3\}$ as follows. To see this, suppose that $x_4\in N_1(v,C_5^\sigma)$ is different from $x_1, x_2, x_3$. As noted above, $x_4$ is adjacent to exactly one of $x_1$, $x_2$ and $x_2$, $x_3$, respectively. This gives $x_4\sim x_2$ since otherwise $g\leq 4$. However, it also leads to a contradiction $g\leq 4$ because $x_1$, $x_2$, $x_3$, $x_4$ are all adjacent to one vertex of $V(C_5^\sigma)$. Hence, $N_1(v,C_5^\sigma)=\{x_1, x_2, x_3\}$, and so $\Gamma\cong\Gamma_4$ (see Fig. \ref{fig-5}) because $\Gamma$ contains no unbalanced 5-cycle.

{\flushleft\bf Subcase 2.} $g=6$ and there exists an unbalanced $C_6^\sigma$ in $\Gamma$.

We may assume that exactly one edge of $C_6^\sigma$, say $y_1y_4$, is negative by Lemma \ref{lem-2-11}. Then $\Gamma[V(C_6^\sigma)\cup\{x_1,x_2\}]\cong (B(4,4,4), \sigma_2)$ (see Fig. \ref{fig-3}). Furthermore, since $\Gamma$ contains no balanced signed 6-cycle, there is exactly one negative edge in $x_1y_1, x_1x_2$ and $x_2y_2$ by Lemma \ref{lem-2-11}. But now $\{x_1, x_2,y_1, y_2, y_3, y_4\}$ induces a balance 6-cycle, a contradiction.

{\flushleft\bf Case 2.} $N_2(v,C_g^\sigma)\neq\emptyset$.

We firstly give three claims in this case.

{\flushleft\bf Claim 1.} $N_2(v,C_g^\sigma)$ is an independent set, and each vertex of $N_1(v,C_g^\sigma)$ is adjacent to at least one vertex of $N_2(v,C_g^\sigma)$.
\begin{proof}
Suppose, by way of contradiction, that $N_2(v,C_g^\sigma)$ is not an independent set. Let $x_1',x_2'\in N_2(v,C_g^\sigma)$ and $x_1'\sim x_2'$. We may assume $x_1'\sim x_1\in N_1(v,C_g^\sigma)$. Since $g\neq 3$, $x_1 \not\sim x_2'$. Furthermore, by Lemma \ref {lem-2-5}, $x_1$ is adjacent to exactly one vertex of $V(C_g^\sigma)$, say $y$, because $g\geq 5$. A contradiction
$i_-(\Gamma)\geq i_-(\Gamma[V(C_g^\sigma)\cup \{x_1',x_2',x_1\}])=i_-(\Gamma[V(C_g^\sigma) \setminus \{y\}])+2=\lfloor\frac{g-1}{2}\rfloor+2>\left \lceil \frac{g}{2}\right\rceil$ follows from Lemmas \ref{lem-2-1-1}, \ref{lem-2-1} and \ref{lem-2-2}. Hence, $N_2(v,C_g^\sigma)$ is an independent set.

On the other hand, let $x\in N_1(v,C_g^\sigma)$, $x\sim y\in V(C_g^\sigma)$ and $x$ is not adjacent to all vertices of $N_2(v,C_g^\sigma)$. Set $x'\in N_2(v,C_g^\sigma)$ and $x'\sim x_1\in N_1(v,C_g^\sigma)$. According to Lemmas \ref{lem-2-1-1}, \ref{lem-2-1} and \ref{lem-2-2}, we get a contradiction
$$i_-(\Gamma)\geq i_-(G[V(C_g^\sigma)\cup \{x_1, x, x'\}])=i_-(G[V(C_g^\sigma)\setminus\{y\}])+2=\lfloor\frac{g-1}{2}\rfloor+2>\lceil\frac{g}{2}\rceil.$$
Hence, each vertex of $N_1(v,C_g^\sigma)$ is adjacent to at least one vertex of $N_2(v,C_g^\sigma)$.
\end{proof}

{\flushleft\bf Claim 2.}  The vertex of $N_2(v,C_g^\sigma)$ is adjacent to at most three vertices of $N_1(v,C_g^\sigma)$. Furthermore, if one vertex of $N_2(v,C_g^\sigma)$ is adjacent to two vertices of $N_1(v,C_g^\sigma)$, then $|N_2(v,C_g^\sigma)|=1$.
\begin{proof}
On the contrary, suppose that $x'\in N_2(v,C_g^\sigma)$ is adjacent to four vertices $x_1,x_2,$ $x_3,x_4$ of $N_1(v,C_g^\sigma)$. Then, $x_1,x_2,x_3,x_4$ have no
common neighbors in $V(C_g^\sigma)$ since $g\geq 5$. Set $x_i\sim y_i\in V(C_g^\sigma)$ for $i\in \{1,\ldots, 4\}$. Let $l$ be length of the shortest path in $C_g^\sigma$ between $y_1,y_2,y_3,y_4$. Then, $l\leq\lfloor\frac{g}{4}\rfloor$. Now we can found a cycle of length at most $\lfloor\frac{g}{4}\rfloor+4$ in $\Gamma$. Clearly, $\lfloor\frac{g}{4}\rfloor+4\geq g$, in other word, $\lceil\frac{3g}{4}\rceil \leq 4$, which implies that $C_g^\sigma$ is a balanced 5-cycle. Hence, $\Gamma[V(C_g^\sigma)\cup\{x',x_1,x_2,x_3,x_4\}]\cong H_2^\sigma$ (see Fig. \ref{fig-4}) because $\Gamma$ contains no unbalanced 5-cycle. This leads to a contradiction $i_-(\Gamma)\geq i_-(H_2^\sigma)=4>3=\lceil\frac{5}{2}\rceil$ by Lemma \ref{lem-2-1-1} and we are done.

Next, suppose that $x_1'\in N_2(v,C_g^\sigma)$ is adjacent to $x_1,x_2\in N_1(v,C_g^\sigma)$. If there exists $x_2'\in N_2(v,C_g^\sigma)$ different from $x_1'$ and $x_2'\sim x_3\in
N_1(v,C_g^\sigma)$. Without loss of generality, we may assume that $x_3\not= x_1$. Since $x_1'\not\sim x_2'$ by Claim 1, we have $$i_-(\Gamma[V(C_g^\sigma)\cup\{x_1,x_3,x_1',x_2'\}])=i_-(\Gamma[V(C_g^\sigma)\cup\{x_1,x_1'\}])+1=i_-(C_g^\sigma)+2.$$
Then Lemma \ref{lem-2-1-1} implies a contradiction $i_-(\Gamma)\geq i_-(C_g^\sigma)+2=(\lceil\frac{g}{2}\rceil-1)+2>\lceil\frac{g}{2}\rceil$. Hence, $|N_2(v,C_g^\sigma)|=1$.
\end{proof}

{\flushleft\bf Claim 3.} $N_i(v,C_g^\sigma)=\emptyset$ for $i\geq 3$.

\begin{proof}
It suffice to prove $N_3(v,C_g^\sigma)=\emptyset$. On the contrary, suppose that $x''\in N_3(v,C_g^\sigma)\neq\emptyset$. Then, we may assume $x''\sim x'\in N_2(v,C_g^\sigma)$ and $x'\sim x\in N_1(v,C_g^\sigma)$. Recall that $x$ is adjacent to exactly one vertex of $V(C_g^\sigma)$, say $y$, by Lemma \ref{lem-2-5}. According to Lemmas \ref{lem-2-1-1}, \ref{lem-2-1} and \ref{lem-2-2}, we get a  contradiction
$$i_-(\Gamma)\geq i_-(\Gamma[V(C_g^\sigma)\cup\{x,x',x''\}])=i_-(\Gamma[V(C_g^\sigma)\setminus\{y\}])+2=\lfloor\frac{g-1}{2}\rfloor+2>\lceil\frac{g}{2}\rceil.$$
\end{proof}

Now we consider the following three cases:
{\flushleft\bf Subcase 1.} One vertex of $N_2(v,C_g^\sigma)$ is adjacent to three vertices $x_1, x_2, x_3$ of $N_1(v,C_g^\sigma)$.

We may assume $N_2(v,C_g^\sigma)=\{x'\}$ by Claim 2, then $N_1(v,C_g^\sigma)=\{x_1, x_2, x_3\}$ by Claim 1. Recall that any vertex of $N_1(v,C_g^\sigma)$ is adjacent to one vertex of $V(C_g^\sigma)$ by Lemma \ref{lem-2-5}. Then, $x_1,x_2,x_3$ have no common neighbors in $V(C_g^\sigma)$ because $g\geq 5$. Set $y_1,y_2,y_3\in V(C_g^\sigma)$ and $x_i\sim y_i$ for $i\in \{1,2,3\}$. By the manner similar to that of Claim 2, we have $\lceil\frac{2g}{3}\rceil \leq 4$. Then $C_g^\sigma$ is a balanced 5-cycle or an unbalanced 6-cycle. Since $N_r(v, C_g^\sigma)=\emptyset$ for $r\geq 3$ by Claim 3, $\Gamma$ is switching equivalent to $(H_3, +)$ (see Fig. \ref{fig-4}), $H_4^\sigma$ (see Fig. \ref{fig-3}) or $H_5^\sigma$ (see Fig. \ref{fig-4}). Note that $H_4^\sigma$ is switching equivalent to $H_4^{\sigma_1}$ (see Fig. \ref{fig-4}) or $H_4^{\sigma_2}$ (see Fig. \ref{fig-5}). By a simple operation, we have $i_-(H_3, +)=i_-(H_4^{\sigma_1})=4\geq 3=\lceil\frac{5}{2}\rceil$ and $i_-(H_4^{\sigma_2})=i_-(H_5^\sigma)=3$. Hence $\Gamma\cong H_4^{\sigma_2}$ or $H_5^{\sigma}$.

{\flushleft\bf Subcase 2.} One vertex of $N_2(v,C_g^\sigma)$ is adjacent to exactly two vertices $x_1, x_2$ of $N_1(v,C_g^\sigma)$.

We also set $N_1(v,C_g^\sigma)=\{x_1, x_2\}$ by Claim 1 and $N_2(v,C_g^\sigma)=\{x'\}$  by Claim 2. Let $x_1\sim y_1\in V(C_g^\sigma)$, $x_2\sim y_2\in V(C_g^\sigma)$ and $y_1\neq y_2$ since $g\geq 5$. Consequently, there exists a path of length 4 from $y_1$ to $y_2$ vertex-disjoint to $V(C_g^\sigma)$, it gives $\lceil\frac{g}{2}\rceil\leq4$ by Lemma \ref{lem-2-5}. Thus, $g\leq 8$, and so $C_g^\sigma$ is a balanced 5-cycle, 8-cycle, or a unbalanced 6-cycle, 7-cycle. Now $\Gamma$ is switching equivalent to one of $(B(4,3,5),\sigma)$ (see Fig. \ref{fig-3}), $(B(5,2,5), +)$ (see Fig. \ref{fig-5}), $(B(5,5,5),+)$ (see Fig. \ref{fig-5}), $(B(5,3,5),\sigma)$ (see Fig. \ref{fig-5}), $(B(4,4,5),\sigma)$ (see Fig. \ref{fig-3}) and $(B(5,4,5),\sigma)$ (see Fig. \ref{fig-5}). Note that $(B(4,3,5),\sigma)$ is switching equivalent to $(B(4,3,5),\sigma_1)$ (see Fig. \ref{fig-4}) or $(B(4,3,5),\sigma_2)$ (see Fig. \ref{fig-5}), $(B(4,4,5),\sigma)$ is switching equivalent to $(B(4,4,5),\sigma_1)$ or $(B(4,4,5),\sigma_2)$ (see Fig. \ref{fig-4}).
By a simple operation, we get $i_-((B(4,3,5),\sigma_1))=4>\lceil\frac{5}{2}\rceil$, $i_-((B(4,3,5),\sigma_2))=3=\lceil\frac{5}{2}\rceil$, $i_-((B(5,2,5),+))=3=\lceil\frac{5}{2}\rceil$, $i_-((B(5,5,5),+))=4=\lceil\frac{8}{2}\rceil$, $i_-((B(5,3,5),\sigma))=3=\lceil\frac{6}{2}\rceil$, $i_-((B(4,4,5),\sigma_1))=i_-((B(4,4,5),\sigma_2))=4>\lceil\frac{6}{2}\rceil$, $i_-((B(5,4,5),\sigma))=4=\lceil\frac{7}{2}\rceil$. Hence $\Gamma\cong (B(4,3,5),\sigma_2)$ $, (B(5,2,5),+), (B(5,5,5),+)$, $(B(5,3,5),\sigma)$ or $(B(5,4,5),\sigma)$.

{\flushleft\bf Subcase 3.} Any vertex of $N_2(v,C_g^\sigma)$ is adjacent to exactly one vertex of $N_1(v,C_g^\sigma)$.

Recall that $N_2(v,C_g^\sigma)$ is an independent set by Claim 1. Firstly, we show that all vertices of $N_2(v,C_g^\sigma)$ have same neighbors in $N_1(v,C_g^\sigma)$. In fact, suppose that $x_1',x_2'\in N_2(v,C_g^\sigma)$ and $x_1'\sim x_1$, $x_2'\sim x_2$, where $x_1,x_2\in N_1(v,C_g^\sigma)$ and $x_1\neq x_2$. Then, we have $$i_-(\Gamma)\geq i_-(\Gamma[V(C_g^\sigma)\cup\{x_1,x_2,x_1',x_2'\}])=i_-(C_g^\sigma)+2=(\lceil\frac{g}{2}\rceil-1)+2>\lceil\frac{g}{2}\rceil$$ by Lemmas \ref{lem-2-1-1} and \ref{lem-2-2}, a contradiction. Hence, all vertices of $N_2(v,C_g^\sigma)$ are adjacent to one vertex of $N_1(v,C_g^\sigma)$, say $x_1$. Furthermore, we have $N_1(v,C_g^\sigma)=\{x_1\}$ by Claim 1. Hence $\Gamma$ is obtained by joining a vertex of cycle $C_g^\sigma$ to the center of a star $K_{1,r}$.
\end{proof}

\section{Connected signed graphs with given positive inertia index and given girth}
Let $\Gamma=(G, \sigma)$ be a connected signed graph. By replacing the signature $\sigma$ of $\Gamma$ with $-\sigma$, one can obtain a signed graph $-\Gamma=(G, -\sigma)$
known as the \emph{negation} of $\Gamma$. By elementary knowledge of linear algebra, the negative inertia index of $\Gamma$ is equal to the positive inertia index of $-\Gamma$. Hence the signed graphs with given positive inertia index and given girth are determined in this section.

\begin{thm}\label{thm-4-1}
Let $\Gamma=(G,\sigma)$ be a connected signed graph with girth $g$ and $C_g^\sigma$ be a shortest cycle in $\Gamma$. Then $i_+(\Gamma)\geq\lceil\frac{g}{2}\rceil-1$. Equality holds if and only if $\Gamma$ satisfies one of the following conditions:
\begin{enumerate}[(1)]
\setlength{\itemsep}{0pt}
\item $\Gamma\cong C_g^\sigma$, where $C_g^\sigma$ is balanced and $g\equiv 0,3(\bmod\ 4)$, or unbalanced and $g\equiv 1,2(\bmod\ 4)$;
\item $\Gamma\cong (K^\sigma_{n_1,n_2,\ldots,n_l}, +)$.
\end{enumerate}
\end{thm}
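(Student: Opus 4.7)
The strategy is to deduce Theorem~\ref{thm-4-1} from Theorem~\ref{thm-3-1} via the negation operation. I would consider $-\Gamma=(G,-\sigma)$, whose adjacency matrix satisfies $A(-\Gamma)=-A(\Gamma)$; consequently the spectrum of $-\Gamma$ is the negation of the spectrum of $\Gamma$, so $i_+(\Gamma)=i_-(-\Gamma)$. Since $\Gamma$ and $-\Gamma$ share the underlying graph $G$, they have the same girth $g$, and $-C_g^\sigma$ is a shortest cycle of $-\Gamma$. Applying Theorem~\ref{thm-3-1} to $-\Gamma$ immediately gives $i_+(\Gamma)=i_-(-\Gamma)\geq\lceil g/2\rceil-1$, establishing the inequality.

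For the equality characterization, the pivotal identity is
\[
sgn_{-\sigma}(C)=(-1)^{|E(C)|}\,sgn_{\sigma}(C),
\]
so negation preserves the sign of every even cycle and flips the sign of every odd cycle. When equality holds, Theorem~\ref{thm-3-1} forces $-\Gamma$ into one of its two listed families, and translating each family back to $\Gamma$ will produce the two families in Theorem~\ref{thm-4-1}.

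In the cycle family, Theorem~\ref{thm-3-1}(1) gives $-C_g^\sigma$ balanced with $g\equiv 0,1\pmod 4$ or unbalanced with $g\equiv 2,3\pmod 4$. Running the four subcases through the displayed formula (for instance, $-C_g^\sigma$ balanced with $g\equiv 1\pmod 4$ forces $g$ odd and flips the sign, so $C_g^\sigma$ is unbalanced; $-C_g^\sigma$ balanced with $g\equiv 0\pmod 4$ keeps the sign, so $C_g^\sigma$ is balanced; and so on) produces exactly Theorem~\ref{thm-4-1}(1): $C_g^\sigma$ balanced with $g\equiv 0,3\pmod 4$, or unbalanced with $g\equiv 1,2\pmod 4$. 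In the complete-multipartite family, Theorem~\ref{thm-3-1}(2) gives $-\Gamma\cong(K_{n_1,n_2},+)$ or $-\Gamma\cong K^{-\sigma}_{n_1,\ldots,n_l}$ ($l\geq 3$) with only unbalanced $3$-cycles. Since every cycle in $K_{n_1,n_2}$ is even, negation preserves all cycle signs, so $\Gamma\sim(K_{n_1,n_2},+)$ by Theorem~\ref{thm-1-1}. For the $l\geq 3$ case, negation flips every $3$-cycle sign, so $\Gamma$ has only balanced $3$-cycles; since triangles span the cycle space of $K_{n_1,\ldots,n_l}$ when $l\geq 3$, this is equivalent to $\Gamma$ being balanced, and another application of Theorem~\ref{thm-1-1} gives $\Gamma\sim(K_{n_1,\ldots,n_l},+)$, yielding Theorem~\ref{thm-4-1}(2).

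The main technical burden is the parity bookkeeping when translating balance conditions across negation; none of it is deep, but each of the four mod-$4$ subcases needs to be handled separately. The only auxiliary ingredient beyond Theorem~\ref{thm-3-1} is the fact that triangles generate the cycle space of a complete multipartite graph with $l\geq 3$, which is a short induction on cycle length using that any cycle of length at least $4$ in such a graph admits a chord.
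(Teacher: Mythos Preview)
Your approach is correct and is essentially the same as the paper's: the paper does not give a separate proof of Theorem~\ref{thm-4-1} but simply observes at the start of Section~4 that the negation $-\Gamma=(G,-\sigma)$ satisfies $i_+(\Gamma)=i_-(-\Gamma)$, and states the theorem as the immediate translation of Theorem~\ref{thm-3-1}. Your write-up actually supplies more detail than the paper does (the mod-$4$ bookkeeping and the cycle-space argument for the complete-multipartite case), all of which is correct.
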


\begin{thm}\label{thm-4-2}
Let $\Gamma$ be a canonical signed unicyclic graph with girth $g$ and the unique cycle $C_g^\sigma$. Then, the following statements hold:
\begin{enumerate}[(1)]
\setlength{\itemsep}{0pt}
\item If $\Gamma$ is a cycle, then $i_+(\Gamma)=\lceil \frac{g}{2}\rceil$ if and only if $\Gamma\cong C_g^\sigma$, where $C_g^\sigma$ is balanced and $g\equiv 1,2(\bmod\ 4)$, or unbalanced and $g\equiv 0,3(\bmod\ 4)$;
\item If $\Gamma$ is not a cycle and $g\equiv 1,3(\bmod\ 4)$, then $i_+(\Gamma)=\lceil \frac{g}{2}\rceil$ if and only if $\Gamma$ has one or more attached pendant stars such that exactly one path between any two major vertices of $V(C_g^\sigma)$ has even order;
\item If $\Gamma$ is not a cycle and $g\equiv 0,2(\bmod\ 4)$, then $i_+(\Gamma)=\lceil \frac{g}{2}\rceil$ if and only if $\Gamma$ has one or more attached pendant stars such that all paths between any two major vertices of $V(C_g^\sigma)$ have odd order.
\end{enumerate}
\end{thm}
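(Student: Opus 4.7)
The plan is to reduce Theorem \ref{thm-4-2} to Theorem \ref{thm-3-2} via the negation operation introduced at the beginning of Section 4. Since $i_+(\Gamma)=i_-(-\Gamma)$ and the underlying graph of $-\Gamma$ is the same as that of $\Gamma$, the negated graph $-\Gamma$ is again a canonical signed unicyclic graph, with the same unique cycle (as a set of vertices/edges), the same major vertices, and the same pendant stars. I would first record one elementary bookkeeping fact: negating all edges of a cycle of length $g$ multiplies its sign by $(-1)^g$, so $-C_g^\sigma$ is balanced iff $C_g^\sigma$ is balanced when $g$ is even, and $-C_g^\sigma$ is balanced iff $C_g^\sigma$ is unbalanced when $g$ is odd.

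For parts (2) and (3), essentially nothing new is needed. The characterizations in Theorem \ref{thm-3-2}(2)--(3) depend only on the parities of the path lengths between major vertices on the cycle; no balanced/unbalanced hypothesis on $C_g^\sigma$ appears. Therefore, applying those two parts of Theorem \ref{thm-3-2} to $-\Gamma$ gives $i_-(-\Gamma)=\lceil g/2\rceil$ under precisely the combinatorial conditions stated in Theorem \ref{thm-4-2}(2)--(3), and substituting $i_+(\Gamma)=i_-(-\Gamma)$ closes those cases.

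For part (1), where $\Gamma$ is a cycle, I would apply Theorem \ref{thm-3-2}(1) to $-\Gamma$, obtaining $i_+(\Gamma)=\lceil g/2\rceil$ iff $-C_g^\sigma$ is balanced with $g\equiv 2,3\pmod 4$ or unbalanced with $g\equiv 0,1\pmod 4$. A short four-case analysis on $g\bmod 4$, using the sign-flip rule above, translates these conditions on $-C_g^\sigma$ back into conditions on $C_g^\sigma$: for $g\equiv 0\pmod 4$, $C_g^\sigma$ must be unbalanced; for $g\equiv 1\pmod 4$, balanced; for $g\equiv 2\pmod 4$, balanced; for $g\equiv 3\pmod 4$, unbalanced. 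These are exactly the two alternatives listed in Theorem \ref{thm-4-2}(1).

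The only real obstacle is bookkeeping, not mathematics: one must verify carefully that the defining properties of a canonical signed unicyclic graph, pendant stars, and major vertices are all features of the underlying graph and therefore invariant under negation, and that the parity flip of the sign of $C_g^\sigma$ under $\sigma\mapsto -\sigma$ is tracked correctly in the four-case analysis of part (1). With these in place, Theorem \ref{thm-4-2} follows immediately from Theorem \ref{thm-3-2}.
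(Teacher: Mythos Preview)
Your proposal is correct and matches the paper's approach exactly. The paper does not give a separate proof of Theorem \ref{thm-4-2}; it simply introduces the negation $-\Gamma$ at the start of Section 4, notes that $i_-(\Gamma)=i_+(-\Gamma)$ (equivalently $i_+(\Gamma)=i_-(-\Gamma)$), and then states Theorems \ref{thm-4-1}--\ref{thm-4-3} as direct translations of the Section 3 results, so your explicit bookkeeping with the sign-flip $sgn(-C_g^\sigma)=(-1)^g\,sgn(C_g^\sigma)$ and the four-case analysis for part (1) is precisely what the paper is tacitly invoking.
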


\begin{thm}\label{thm-4-3}
Let $\Gamma=(G,\sigma)$ be a connected signed graph with girth $g\geq 4$. Suppose that $\Gamma$ is not canonical signed unicyclic graph. Then $i_+(\Gamma)=\lceil\frac{g}{2}\rceil$ if and only if $\Gamma$ is isomorphic to one of the following signed graphs:
\begin{enumerate}[(1)]
\setlength{\itemsep}{0pt}
\item The signed graphs with girth 4 obtained from $P_4^\sigma$, $P_5^\sigma$, unbalanced $C_5^\sigma$, unbalanced $C_6^\sigma$, unbalanced $C_4^\sigma$, $G_1^\sigma$ or $G_2^\sigma$ by adding twin vertices;
\item The signed graphs obtained by joining a vertex of cycle $C_g^\sigma$ to the center of a star $K_{1,r}$, where $g\equiv 0,3(\bmod\ 4)$ if $C_g^\sigma$ is balanced and $g\equiv 1,2(\bmod\ 4)$ if $C_g^\sigma$ is unbalanced;
\item The signed graphs $\Gamma_1, \Gamma_2$, $\Gamma_3$, $(B(4,3,4),-)$, $-\Gamma_4$, $-H_4^{\sigma_2}$, $H_5^{\sigma}$, $-(B(4,3,5),\sigma)$, $(B(5,$ $2,5),-)$, $(B(5,3,5),\sigma)$, $-(B(5,4,5),\sigma)$ and $(B(5,5,5),+)$ (see Fig. \ref{fig-5}).
\end{enumerate}
\end{thm}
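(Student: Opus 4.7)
The plan is to reduce Theorem~\ref{thm-4-3} to its negative-inertia counterparts, Theorems~\ref{thm-3-3} and~\ref{thm-3-4}, via the negation operation introduced at the start of Section~4. Since $A(-\Gamma)=-A(\Gamma)$, the eigenvalues of $-\Gamma$ are the negatives of those of $\Gamma$, hence $i_+(\Gamma)=i_-(-\Gamma)$. Negation preserves the underlying graph, and therefore preserves the girth $g$ and the property of being (or not being) a canonical signed unicyclic graph, since the definition of ``canonical'' depends only on neighborhoods in the underlying graph. Consequently, $\Gamma$ satisfies the hypotheses and conclusion $i_+(\Gamma)=\lceil g/2\rceil$ of Theorem~\ref{thm-4-3} if and only if $-\Gamma$ satisfies the hypotheses and conclusion $i_-(-\Gamma)=\lceil g/2\rceil$ of Theorem~\ref{thm-3-3} or Theorem~\ref{thm-3-4}. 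The list in Theorem~\ref{thm-4-3} is then obtained by applying $-$ to every signed graph in the unions of those two lists.

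The translation of the balance-parity conditions under negation is governed by the identity $sgn(C_\ell^{-\sigma})=(-1)^\ell sgn(C_\ell^\sigma)$: balance is preserved for cycles of even length and reversed for cycles of odd length. Applying this, the dichotomy separating Theorem~\ref{thm-3-3} from Theorem~\ref{thm-3-4} (namely, whether there exists a shortest cycle with $i_-(C_g^{-\sigma})=\lceil g/2\rceil$) translates on the $\Gamma$-side, via Lemma~\ref{lem-2-1}, to the existence of a shortest cycle $C_g^\sigma$ in $\Gamma$ that is balanced with $g\equiv 1,2\pmod 4$ or unbalanced with $g\equiv 0,3\pmod 4$. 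This shift accounts for the difference between the parity conditions in Theorem~\ref{thm-4-3}~(2) and Theorem~\ref{thm-3-4}~(2), and also explains why the balanced $C_5^\sigma$ of Theorem~\ref{thm-3-4}~(1) is replaced in Theorem~\ref{thm-4-3}~(1) by an unbalanced $C_5^\sigma$; the girth-$4$ entries whose only relevant cycles have even length (namely $P_4^\sigma,P_5^\sigma$, unbalanced $C_4^\sigma$, unbalanced $C_6^\sigma$, $G_1^\sigma$, $G_2^\sigma$) transfer unchanged because adding twin vertices commutes with negation up to switching.

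The main obstacle, and the only remaining substantive work, lies in part~(3). For each of $\Gamma_1,\Gamma_2,\Gamma_3$ from Theorem~\ref{thm-3-3}~(2) and each of the nine explicit signed graphs $(B(4,3,4),+),\Gamma_4,H_4^{\sigma_2},H_5^\sigma,(B(4,3,5),\sigma_2),(B(5,2,5),+),(B(5,5,5),+),(B(5,3,5),\sigma),(B(5,4,5),\sigma)$ from Theorem~\ref{thm-3-4}~(3), I would compute the switching-equivalence class of its negation, using Theorem~\ref{thm-1-1} and Lemma~\ref{lem-2-11} to bring each negative-edge pattern into a canonical form, and then identify the result with one of the signed graphs drawn in Figures~\ref{fig-3}--\ref{fig-5}. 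A graph that turns out to be switching equivalent to its own negation reappears in the list of Theorem~\ref{thm-4-3}~(3) without a minus sign, while the remaining graphs are tagged with an explicit $-$, exactly as recorded in the statement. Because this is a finite check and all of the spectral analysis has already been executed in Theorems~\ref{thm-3-3} and~\ref{thm-3-4}, no new inertia computations are required, and once these identifications are verified the theorem follows.
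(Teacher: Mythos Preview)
Your proposal is correct and follows precisely the route the paper itself takes: the only argument the paper offers for Theorem~\ref{thm-4-3} is the opening paragraph of Section~4, which reduces everything to Theorems~\ref{thm-3-3} and~\ref{thm-3-4} via the negation identity $i_+(\Gamma)=i_-(-\Gamma)$. You have simply spelled out in more detail the bookkeeping (how balance, girth, the canonical property, and the individual graphs in the lists transform under negation) that the paper leaves implicit.
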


\section{Connected signed graphs with given nullity and given girth}
Evidently, the relation $i_+(\Gamma)+i_-(\Gamma)+\eta(\Gamma)=n$ holds for a connected signed graph $\Gamma$ of order $n$. By using the conclusions of inertia indices
in above two sections, we derive connected signed graphs with given nullity and given girth in this section, which relate the conclusion of \cite{Q.Wu}.

Suppose that the inertia indices of signed graph $\Gamma$ are all $\lceil \frac{g}{2}\rceil-1$. Then $\eta(\Gamma)=n-g+1$ if and only if $g$ is an odd, and $\eta(\Gamma)=n-g+2$ if and only if $g$ is an even. Hence, $\eta(\Gamma)=n-g+2$ holds if and only if the above conditions holds. By using the lower bounds of inertia indices in Theorems \ref{thm-3-1} and \ref{thm-4-1}, and taking out the identical extremal signed graphs corresponding to the lower bounds of inertia indices for even $g$, Theorem \ref{thm-5-1} follows.

\begin{thm}\label{thm-5-1}
Let $\Gamma=(G,\sigma)$ be a connected signed graph with girth $g$ and $C_g^\sigma$ be a shortest cycle in $\Gamma$. Then $\eta(\Gamma)\leq n-g+2$. Equality holds if and only if $\Gamma$ is a balanced $C_g^\sigma$ for $g\equiv 0(\bmod\ 4)$, an unbalanced $C_g^\sigma$ for $g\equiv 2(\bmod\ 4)$ or $\Gamma\cong (K_{n_1,n_2}, +)$.
\end{thm}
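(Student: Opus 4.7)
The plan is to deduce the inequality $\eta(\Gamma) \le n - g + 2$ from the identity $i_+(\Gamma) + i_-(\Gamma) + \eta(\Gamma) = n$ together with the two lower bounds already proved in Theorems \ref{thm-3-1} and \ref{thm-4-1}, and then to read off the equality cases by intersecting the two extremal families.

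First I would simply add the two bounds: Theorems \ref{thm-3-1} and \ref{thm-4-1} give
$$i_+(\Gamma) + i_-(\Gamma) \ge 2\Bigl(\Bigl\lceil \tfrac{g}{2}\Bigr\rceil - 1\Bigr).$$
Hence $\eta(\Gamma) = n - i_+(\Gamma) - i_-(\Gamma) \le n - 2\lceil g/2\rceil + 2$. Split on parity: if $g$ is even then $2\lceil g/2\rceil = g$ and we obtain exactly $\eta(\Gamma) \le n - g + 2$; if $g$ is odd then $2\lceil g/2\rceil = g+1$ and we obtain the strictly stronger $\eta(\Gamma) \le n - g + 1$. Either way the stated inequality holds, and the odd-$g$ case can never attain equality.

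For the equality characterization I would first note that the above parity argument forces $g$ to be even whenever $\eta(\Gamma) = n - g + 2$, and moreover forces both $i_+(\Gamma) = g/2 - 1$ and $i_-(\Gamma) = g/2 - 1$ simultaneously. Thus $\Gamma$ must belong to the intersection of the extremal classes listed in Theorems \ref{thm-3-1}(1)-(2) and \ref{thm-4-1}(1)-(2). Since $g$ is even, in particular $g \ge 4$, so $\Gamma$ is triangle-free; this immediately kills every complete multipartite candidate with $l \ge 3$ in both theorems, because those have girth $3$. The remaining multipartite candidate common to both lists is the all-positive complete bipartite graph $(K_{n_1,n_2}, +)$, which is triangle-free and satisfies both equalities.

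It only remains to intersect the cyclic extremal classes. Theorem \ref{thm-3-1}(1) permits $C_g^\sigma$ balanced with $g \equiv 0, 1 \pmod 4$ or unbalanced with $g \equiv 2, 3 \pmod 4$; Theorem \ref{thm-4-1}(1) permits $C_g^\sigma$ balanced with $g \equiv 0, 3 \pmod 4$ or unbalanced with $g \equiv 1, 2 \pmod 4$. Restricting to even $g$, the common members are exactly the balanced $C_g^\sigma$ with $g \equiv 0 \pmod 4$ and the unbalanced $C_g^\sigma$ with $g \equiv 2 \pmod 4$, matching the statement. The main obstacle here is the bookkeeping of the four parity classes in the two theorems and the verification that the triangle-free restriction collapses all multipartite extremal graphs down to the bipartite ones; once this intersection analysis is carried out cleanly, no further computation is needed.
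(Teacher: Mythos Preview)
Your proof is correct and follows essentially the same approach as the paper: combine the identity $i_+(\Gamma)+i_-(\Gamma)+\eta(\Gamma)=n$ with the lower bounds of Theorems~\ref{thm-3-1} and~\ref{thm-4-1}, observe that the parity of $g$ forces $g$ even at equality, and then intersect the two extremal families. Your explicit parity bookkeeping for the cyclic classes and your observation that even $g$ eliminates the $l\ge 3$ multipartite candidates are exactly the ``taking out the identical extremal signed graphs'' step the paper describes in the paragraph preceding the theorem.
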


Suppose that the inertia indices of a signed graph $\Gamma$ are $\lceil \frac{g}{2}\rceil-1$ and $\lceil \frac{g}{2}\rceil$, respectively. Then $\eta(\Gamma)=n-g+1$ if and only if $g$ is an even, and $\eta(\Gamma)=n-g$ if and only if $g$ is an odd. Suppose that the inertia indices of a signed graph $\Gamma$ are all $\lceil \frac{g}{2}\rceil$. Then  $\eta(\Gamma)=n-g-1$ if and only if $g$ is an odd, and $\eta(\Gamma)=n-g$ if and only if $g$ is an even. Hence, $\eta(\Gamma)=n-g+1$ holds if and only if $i_+(\Gamma)=\lceil \frac{g}{2}\rceil-1$ and $i_-(\Gamma)=\lceil \frac{g}{2}\rceil-1$ for odd $g$, or $i_+(\Gamma)=\lceil \frac{g}{2}\rceil$ and $i_-(\Gamma)=(\lceil \frac{g}{2}\rceil-1)$ for even $g$, or $i_-(\Gamma)=\lceil \frac{g}{2}\rceil$ and $i_+(\Gamma)=\lceil \frac{g}{2}\rceil-1$ for even $g$. Unfortunately, there exists no signed graphs satisfying these conditions.

Similarly, $\eta(\Gamma)=n-g$ holds if and only if $i_+(\Gamma)=(\lceil \frac{g}{2}\rceil)$ and $i_-(\Gamma)=\lceil \frac{g}{2}\rceil-1$ for odd $g$, or $i_-(\Gamma)=\lceil \frac{g}{2}\rceil$ and $i_+(\Gamma)=\lceil \frac{g}{2}\rceil-1$ for odd $g$, or $i_+(\Gamma)=\lceil \frac{g}{2}\rceil$ and $i_-(\Gamma)=\lceil \frac{g}{2}\rceil$ for even $g$. Hence connected signed graph $\Gamma$ with girth $g$ and $\eta(\Gamma)=n-g$ is characterized in following two theorems.

\begin{thm}\label{thm-5-2}
Let $\Gamma$ be a canonical signed unicyclic graph with girth $g$ and the unique cycle $C_g^\sigma$. Then, the following statements hold:
\begin{enumerate}[(1)]
\setlength{\itemsep}{0pt}
\item If $\Gamma$ is a cycle, then $\eta(\Gamma)=n-g$ if and only if $\Gamma\cong C_g^\sigma$, where $g$ is an odd, or $C_g^\sigma$ is balanced and $g\equiv 2(\bmod\ 4)$, or unbalanced and $g\equiv 0(\bmod\ 4)$;
\item If $\Gamma$ is not a cycle, then $\eta(\Gamma)=n-g$ if and only if $g$ is an even and $\Gamma$ has one or more attached pendant stars such that all paths between any two major vertices of $V(C_g^\sigma)$ have odd order.
\end{enumerate}
\end{thm}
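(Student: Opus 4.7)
The plan is to convert the nullity condition into an inertia identity via $i_+(\Gamma)+i_-(\Gamma)+\eta(\Gamma)=n$, which turns $\eta(\Gamma)=n-g$ into the requirement $i_+(\Gamma)+i_-(\Gamma)=g$. Combined with the lower bounds $i_\pm(\Gamma)\ge\lceil g/2\rceil-1$ from Theorems \ref{thm-3-1} and \ref{thm-4-1}, the arithmetic is forced: when $g$ is odd, one of $i_\pm(\Gamma)$ must equal $\lceil g/2\rceil-1$ and the other $\lceil g/2\rceil$; when $g$ is even, both must equal $\lceil g/2\rceil$. The discussion immediately preceding the theorem already isolates exactly these two scenarios, so my proof will simply identify when each scenario is realized.

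For Part (1), in which $\Gamma\cong C_g^\sigma$ and $n=g$, I would read the four values of $i_-(C_g^\sigma)$ directly from Lemma \ref{lem-2-1} according to the residue of $g$ modulo $4$ and the balance of the cycle, and obtain the four values of $i_+(C_g^\sigma)$ from the identity $i_+(\Gamma)=i_-(-\Gamma)$, noting that the negation $-C_g^\sigma$ preserves the cycle's sign when $g$ is even and flips it when $g$ is odd. Adding $i_++i_-$ case by case, I would verify that $i_++i_-=g$ in every odd-$g$ case, whereas among even-$g$ cases only the balanced $g\equiv2\pmod4$ and unbalanced $g\equiv0\pmod4$ cases produce $i_++i_-=g$. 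This recovers (1).

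For Part (2), the decisive observation is that a canonical signed unicyclic graph which is not itself a cycle cannot be any of the extremal graphs listed in Theorems \ref{thm-3-1} and \ref{thm-4-1}: the only complete multipartite graphs among those extremals that happen to be unicyclic are $K_{1,1,1}\cong C_3$ and $K_{2,2}\cong C_4$, and these are cycles. Hence the bounds tighten to $i_\pm(\Gamma)\ge\lceil g/2\rceil$. For odd $g$ this already forces $i_++i_-\ge g+1$, so $\eta(\Gamma)\le n-g-1$, and odd girth contributes nothing. For even $g$, the required equality $i_++i_-=g$ is equivalent to $i_+(\Gamma)=i_-(\Gamma)=\lceil g/2\rceil$, and then Theorem \ref{thm-3-2}(3) and Theorem \ref{thm-4-2}(3) both characterize this by the identical structural condition that every path between two major vertices on $V(C_g^\sigma)$ has odd order. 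This yields (2).

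The main obstacle is purely bookkeeping: accurately tabulating $(i_+(C_g^\sigma), i_-(C_g^\sigma))$ across the four residues and two balance classes in Part (1), and then observing in Part (2) that the conditions from Theorems \ref{thm-3-2}(3) and \ref{thm-4-2}(3) collapse to a single requirement because each depends only on the underlying graph, not on the signing. No substantially new argument is needed; the structural work has already been done in Sections 3 and 4, and the present theorem is essentially an assembly of those pieces.
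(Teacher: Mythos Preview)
Your proposal is correct and follows essentially the same approach as the paper. The paper does not give a separate proof of Theorem \ref{thm-5-2}; instead, the discussion preceding it derives (exactly as you do) that $\eta(\Gamma)=n-g$ forces either $\{i_+,i_-\}=\{\lceil g/2\rceil-1,\lceil g/2\rceil\}$ with $g$ odd or $i_+=i_-=\lceil g/2\rceil$ with $g$ even, and then leaves the reader to intersect the characterizations from Theorems \ref{thm-3-1}, \ref{thm-3-2}, \ref{thm-4-1}, \ref{thm-4-2}; your write-up simply carries this out explicitly, including the observation that a non-cycle canonical signed unicyclic graph cannot appear among the extremal complete multipartite graphs.
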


\begin{thm}\label{thm-5-3}
Let $\Gamma=(G,\sigma)$ be a connected signed graph with girth $g\geq 4$. Suppose that $\Gamma$ is not canonical signed unicyclic graph. Then $\eta(\Gamma)=n-g$ if and only if $\Gamma$ is isomorphic to one of the following signed graphs:
\begin{enumerate}[(1)]
\setlength{\itemsep}{0pt}
\item The signed graphs with girth 4 obtained from $P_4^\sigma$, $P_5^\sigma$, unbalanced $C_6^\sigma$, unbalanced $C_4^\sigma$, $G_1^\sigma$ or $G_2^\sigma$ by adding twin vertices;
\item The signed graphs obtained by joining a vertex of cycle $C_g^\sigma$ to the center of a star $K_{1,r}$, where $g\equiv 0(\bmod\ 4)$ if $C_g^\sigma$ is balanced and $g\equiv 2(\bmod\ 4)$ if $C_g^\sigma$ is unbalanced;
\item The signed graphs $H_5^{\sigma}$, $(B(5,3,5),\sigma)$ and $(B(5,5,5),+)$ (see Fig. \ref{fig-5}).
\end{enumerate}
\end{thm}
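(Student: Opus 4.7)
The plan is to convert the nullity equation into a constraint on the inertia indices and then intersect the previously established characterizations. Since $i_+(\Gamma) + i_-(\Gamma) + \eta(\Gamma) = n$, the hypothesis $\eta(\Gamma) = n - g$ is equivalent to $i_+(\Gamma) + i_-(\Gamma) = g$.

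Next I would narrow the admissible inertia pairs. Theorems \ref{thm-3-1} and \ref{thm-4-1} give $i_\pm(\Gamma) \geq \lceil g/2 \rceil - 1$, so each of $i_+(\Gamma), i_-(\Gamma)$ is at most $g - (\lceil g/2 \rceil - 1)$. For odd $g$ this forces $\{i_+(\Gamma), i_-(\Gamma)\} = \{\lceil g/2 \rceil - 1, \lceil g/2 \rceil\}$; but $i_\pm(\Gamma) = \lceil g/2 \rceil - 1$ forces $\Gamma$ either to be a cycle (hence canonical, and excluded by hypothesis) or to be a complete multipartite signed graph (girth $3$ or $4$, incompatible with $g \geq 5$ odd), so the odd-$g$ case is vacuous. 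For even $g$ the admissible pairs are $(\lceil g/2 \rceil, \lceil g/2 \rceil)$ or $\{\lceil g/2 \rceil - 1, \lceil g/2 \rceil + 1\}$, and the same extremal argument rules out the asymmetric pair because the index $\lceil g/2 \rceil - 1$ cannot be realized for a non-canonical $\Gamma$ with $g \geq 4$. Hence only the case ``$g$ even and $i_+(\Gamma) = i_-(\Gamma) = \lceil g/2 \rceil$'' needs to be handled.

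The remaining task is to intersect the characterizations of Theorems \ref{thm-3-3} and \ref{thm-3-4} (graphs with $i_- = \lceil g/2 \rceil$) with Theorem \ref{thm-4-3} (graphs with $i_+ = \lceil g/2 \rceil$), restricted to even $g$. Using $i_+(\Gamma) = i_-(-\Gamma)$, a signed graph $\Gamma$ lies in the intersection iff both $\Gamma$ and $-\Gamma$ appear, up to switching equivalence, in the $i_- = \lceil g/2 \rceil$ lists. For the girth-$4$ twin-extension family, intersecting Theorems \ref{thm-3-3}(1) and \ref{thm-3-4}(1) with their negation analogues leaves precisely the twins of $P_4^\sigma, P_5^\sigma$, unbalanced $C_4^\sigma, C_6^\sigma, G_1^\sigma, G_2^\sigma$, since balanced $C_5^\sigma$ is discarded (negation flips the balance of an odd cycle and sends the graph outside the $i_-$ list); this is item (1). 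For the $K_{1,r}$-attachment family of Theorems \ref{thm-3-4}(2) and \ref{thm-4-3}(2) the parity conditions intersect to give $g \equiv 0 \pmod 4$ balanced or $g \equiv 2 \pmod 4$ unbalanced, yielding item (2). For the sporadic signed graphs in Theorems \ref{thm-3-3}(2), \ref{thm-3-4}(3) and \ref{thm-4-3}(3) I would restrict to even girth and, for each remaining candidate, verify that both inertia indices coincide with $\lceil g/2 \rceil$; the survivors are exactly $H_5^\sigma$, $(B(5,3,5),\sigma)$ and $(B(5,5,5),+)$, giving item (3).

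The main obstacle is the bookkeeping in this last intersection: one must carefully track how the negation $\Gamma \mapsto -\Gamma$ acts on each sporadic signed graph (balance is flipped for odd-length cycles and preserved for even-length cycles), and then discard candidates whose girth is odd or whose positive inertia index does not equal $\lceil g/2 \rceil$. The sufficiency direction is routine: for graphs in (1) and (2) the index computations follow from Lemmas \ref{lem-2-1} and \ref{lem-2-2} together with the fact that attaching twin vertices preserves both inertia indices, while for the three sporadic graphs in (3) a direct congruence-matrix computation, analogous to the one used in the proof of Theorem \ref{thm-3-3}, yields $i_+ = i_- = \lceil g/2 \rceil$ and hence $\eta = n - g$.
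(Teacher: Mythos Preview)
Your approach is correct and is essentially the paper's own argument: rewrite $\eta(\Gamma)=n-g$ as $i_+(\Gamma)+i_-(\Gamma)=g$, use the lower bounds of Theorems~\ref{thm-3-1} and~\ref{thm-4-1} to pin down the admissible inertia pairs, discard the odd-$g$ case via the extremal characterizations, and then intersect the lists of Theorems~\ref{thm-3-3}, \ref{thm-3-4} and~\ref{thm-4-3} (restricted to even $g$) to obtain items (1)--(3).

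There is one small imprecision you should patch. Your sentence ``the index $\lceil g/2\rceil-1$ cannot be realized for a non-canonical $\Gamma$ with $g\geq 4$'' is false at $g=4$: the graph $(K_{n_1,n_2},+)$ with $n_1 n_2>4$ is not unicyclic (hence not canonical), has girth $4$, and satisfies $i_-=1=\lceil 4/2\rceil-1$ by Theorem~\ref{thm-3-1}. The asymmetric pair $(1,3)$ is still excluded, but for a different reason: the spectrum of $K_{n_1,n_2}$ is $\{\sqrt{n_1n_2},0^{\,n-2},-\sqrt{n_1n_2}\}$, so $i_+=1$ as well, and the pair $(i_+,i_-)=(3,1)$ (or $(1,3)$) is never attained. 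Adding this one-line observation closes the gap; for even $g\geq 6$ your extremal argument already works as stated.
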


Since the condition $g\geq 4$ in Theorem \ref{thm-5-3}, there are two types connected signed graphs that appear in \cite{Q.Wu} but do not appear in Theorem \ref{thm-5-3}. They are signed graphs with girth 3 obtained from balanced $C_3^\sigma$ or unbalanced $C_3^\sigma$ by adding twin vertices.
\vspace{3mm}

\noindent \textbf{Declaration of competing interest}\vspace{3mm}

The authors declare that there are no conflict of interests.\vspace{3mm}

\noindent \textbf{Data availability}\vspace{3mm}

No data was used for the research described in the article.\vspace{3mm}



\begin{thebibliography}{99}

\bibitem{F.Belardo} F. Belardo, S. Cioab$\acute{a}$, J. Koolen, J.F. Wang, Open problem in the spectral theory of signed graphs, Art Discrtete Applied Mathematics 1 (2018) 2-24.
 https://doi.org/ 10.48550/ arXiv.1907.04349
\bibitem{M.Brunetti} M. Brunetti, Z. Stani$\acute{c}$, Ordering signed graphs with large index, Ars Mathematica Contemporanea, 22(2022) P4.05. https://doi.org/10.26493/1855-3974.2714.9b3
\bibitem{D.Cvetkovi} D. Cvetkovi\'{c}, M. Doob, H. Sachs, Spectra of Graphs: Theory and Application. Academic Press, New York, 1980.
\bibitem{Fang.D} F. Duan, Y.H. Yang, Triangle-free signed graphs with small negative inertia index, Discrete Applied Mathematics, 357(2024) 135-142. https://doi.org/10.1016/j.dam. 2024.06.012
\bibitem{Fang.D1} F. Duan, Characterizing the negative inertia index of connected graphs in terms of their girth, Discrete Mathematics, 347(2024) 113997. https://doi.org/10.1016/j. disc.2024.113997
\bibitem{Fang.D2} F. Duan, Q. Yang, On graphs with girth $g$ and positive inertia index of $\lceil\frac{g}{2}\rceil$-1 and $\lceil\frac{g}{2}\rceil$, Linear Algebra and its Applications, 683(2024) 98-110. https://doi.org/ 10.1016/j.laa.2023.12.001
\bibitem{FanY.Z} Y.Z. Fan, Y. Wang, Y. Wang, A note on the nullity of unicyclic signed graphs, Linear Algebra and its Applications, 438(2013) 1193-1200. https://doi.org/10.1016/j. laa.2012.08.027
\bibitem{W.H.Haemers} W.H. Haemers, H. Topcu, On signed graphs with at most two eigenvalues unequal to $\pm1$, Linear Algebra and its Applications, 670(2023) 68-77. https://doi.org /10.1016/j.laa.2023.04.001
\bibitem{F.Harary} F. Harary, On the notion of balanced in a signed graph, Michigan Math. J. 2 (1) (1953) 143-146. https://doi.org/10.1016/ j.laa.2023.04.001
\bibitem{R.A.Horn} R.A. Horn, C.R. Johnson, Matrix Analysis, Cambridge University Press, 1985.
\bibitem{Yaoping.Hou} Y.P. Hou, J.S. Li, Y.L. Pan, On the Laplacian eigenvalues of signed graphs, Linear Multilinear Algebra, 51(1)(2003) 21-30. https://doi.org/10.1080/03081080310000
    53611
\bibitem{M.R.Oboudi3} M.R. Oboudi, Characterization of graphs with exactly two non-negative eigenvalues, Ars Mathematica Contemporanea, 12(2017) 271-286. https://doi.org/10.26493/1855-3974.1077.5b6
\bibitem{Q.Wu} Q. Wu, Y. Lu, B.S. Tam, On connected signed graphs with rank equal to girth, Linear Algebra and its Applications 651 (2022) 90-115. https://doi.org/10.1016/ j.laa.2022.06.019
\bibitem{GuiHai.Yu} G. H. Yu, L. H. Feng, Q. W. Wang, Bicyclic graphs with small positive index of inertia, Linear Algebra and its Applications, 438 (2013) 2036-2045. https://doi.org/10.10 16/j.laa.2012.09.031
\bibitem{G.H.Yu} G.H. Yu, X.D. Zhang, L.H. Feng, The inertia of weighted unicyclic graphs, Linear Algebra and its Applications, 44(2014) 130-152. https://doi.org/10.1016/j.laa .2014.01.023
\bibitem{G.H.Yu1} G.H. Yu, L.H. Feng, Q.W. Wang, A. Ili$\acute{c}$, The minimal positive index of inertia of unicyclic signed graphs, Ars Combinatoria, 117(2014) 245-255.



\end{thebibliography}
\end{document}